\numberwithin{equation}{section}
\def\R{\mathbb{R}}
\def\N{\mathbb{N}}
\def\Z{\mathbb{Z}}
\def\mc{\mathcal}
\def\lesim{\lesssim}
\def\beq{\begin{equation}}
\def\endeq{\end{equation}}
\def\supp{\text{supp}}
\def\dim{\text{dim}}
\title{Polynomial Roth theorems on sets of fractional dimensions}
\author[R. Fraser]{Robert Fraser} 
\address{Robert Fraser: School of Mathematics, the University of Edinburgh, Edinburgh, UK}
\email{Robert.Fraser@ed.ac.uk}
\author[S. Guo]{Shaoming Guo}
\address{Shaoming Guo: Department of Mathematics, University of Wisconsin Madison, USA}
\email{shaomingguo2018@gmail.com}
\author[M. Pramanik]{Malabika Pramanik}
\address{Department of Mathematics, University of British Columbia, Vancouver, Canada}
\email{malabika@math.ubc.ca}
\date{\today}
\theoremstyle{plain}
\newtheorem{thm}{Theorem}[section]
\newtheorem{prop}[thm]{Proposition}
\newtheorem{lem}[thm]{Lemma}
\newtheorem{claim}[thm]{Claim}
\newtheorem*{conj*}{Conjecture}
\newtheorem*{openproblem*}{Open Problem}
\theoremstyle{definition}
\theoremstyle{remark}
\begin{document}
\maketitle

\begin{abstract}
Let $E\subset \R$ be a closed set of Hausdorff dimension $\alpha\in (0, 1)$. Let $P: \R\to \R$ be a polynomial without a constant term whose degree is bigger than one. We prove that if $E$ supports a probability measure satisfying certain dimension condition and Fourier decay condition, then $E$ contains three points $x, x+t, x+P(t)$ for some $t>0$. Our result extends the one of \L aba and the third author \cite{LP09} to the polynomial setting, under the same assumption. It also gives an affirmative answer to a question in Henriot, \L aba and the third author \cite{HLP15}.
\end{abstract}

\section{\bf Statement of results}

Let $P: \R\to \R$ be a polynomial without a constant term whose degree is greater than one. We prove 
\begin{thm}\label{180817thm1.1}
There exists $s_0>0$, depending only on the polynomial $P$, such that the following statement holds: For every $\beta\in (1-s_0, 1)$ and positive real numbers $C_1, C_2$ and $B$, there exists $\alpha_0\in (0, 1)$ such that, if $E\subset [0, 1]$ is a closed set that supports a probability measure $\mu$ satisfying
\beq\label{180817e1.1}
\begin{split}
& (A) \hspace{1cm} \mu([x, x+\epsilon])\le C_1 \epsilon^{\alpha} \text{ for all } 0<\epsilon\le 1,\\
& (B) \hspace{1cm} |\hat{\mu}(k)|\le C_2 (1-\alpha)^{-B} |k|^{-\frac{\beta}{2}} \text{ for all } k\in \Z, k\neq 0,
\end{split}
\endeq
with $1>\alpha>\alpha_0$, then $E$ contains three points 
\beq
x, x+t, x+P(t),
\endeq
for some $t>0$. 
\end{thm}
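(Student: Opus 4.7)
The plan is to introduce a trilinear counting functional
\[
\Lambda(\mu) := \int \int \psi(t) \, d\mu(x) \, d\mu(x+t) \, d\mu(x+P(t)) \, dt,
\]
with $\psi \in C_c^\infty$ a nonnegative bump supported in a subinterval $(t_0, t_1) \subset (0, 1)$ chosen so that $x, x+t, x+P(t) \in [0,1]$ for $t \in \supp \psi$ and $x$ in a suitable sub-interval of $[0, 1]$. Hypothesis $(A)$ forces $\mu$ to be non-atomic, and the set of $t$ for which the three points collapse (i.e., $t = 0$, $P(t) = 0$, or $P(t) = t$) is finite. Hence any strictly positive lower bound on $\Lambda(\mu)$ produces a non-trivial triple $x, x+t, x+P(t) \in E$ with $t > 0$, as desired.

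To make sense of $\Lambda$ I would regularize: let $\mu_\delta = \mu * \eta_\delta$ be the smoothing of $\mu$ at scale $\delta > 0$, work with the $L^\infty$-function $\mu_\delta$, and eventually pass to the limit $\delta \to 0^+$. After Fourier inversion,
\[
\Lambda(\mu_\delta) = \int \int \widehat{\mu_\delta}(-\xi_2 - \xi_3)\, \widehat{\mu_\delta}(\xi_2)\, \widehat{\mu_\delta}(\xi_3) \, J(\xi_2, \xi_3) \, d\xi_2 \, d\xi_3, \quad J(\xi_2, \xi_3) := \int \psi(t) e^{2\pi i (\xi_2 t + \xi_3 P(t))} \, dt.
\]
A key ingredient is the estimate for the oscillatory integral with polynomial phase $\xi_2 t + \xi_3 P(t)$: van der Corput's lemma applied to the order-$d$ derivative $d!\, a_d\, \xi_3$ of the phase yields $|J(\xi_2, \xi_3)| \lesim |\xi_3|^{-1/d}$ uniformly in $\xi_2$ (with $d = \deg P \ge 2$ and $a_d$ the leading coefficient), while repeated integration by parts in the complementary regime $|\xi_2| \gg |\xi_3|$ gives rapid decay in $|\xi_2|$. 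Together, $|J(\xi_2, \xi_3)| \lesim (1 + |\xi_2| + |\xi_3|)^{-1/d}$.

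To extract positivity, decompose $\Lambda(\mu_\delta) = \Lambda_{\text{main}} + \Lambda_{\text{err}}$, where $\Lambda_{\text{main}}$ collects the contribution from frequencies in a bounded box $\{|\xi_2|, |\xi_3| \le R\}$ with $R = R(P, C_1)$ to be chosen. For the main term I expect the ball condition $(A)$ (with $\alpha$ close to $1$) to force $\Lambda_{\text{main}} \ge c_0 > 0$ uniformly in $\delta$, by comparing the truncated Fourier approximation $\mu_{\le R}$ to the Lebesgue density on $[0, 1]$; as $\alpha \uparrow 1$, the energy bound $\int_{|\xi|\le R}|\hat\mu|^2\, d\xi \lesim R^{1-\alpha}$ (a consequence of $(A)$) pushes $\mu_{\le R}$ toward the constant density, on which $\Lambda$ is manifestly positive via the volume of $\{(x, t): x, x+t, x+P(t) \in [0, 1]\}$. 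For $\Lambda_{\text{err}}$, combine the Fourier decay $(B)$, the oscillatory bound on $J$, and the singular-energy estimate $\int_{|\xi|\sim N}|\hat\mu|^2\, d\xi \lesim N^{1-\alpha}$ on dyadic shells; iterated Cauchy--Schwarz in each of the three ``slots'' $\xi_2$, $\xi_3$, $\xi_2 + \xi_3$ should yield a bound of the form $|\Lambda_{\text{err}}| \lesim C_2^3 (1-\alpha)^{-3B} R^{-\eta}$ for some $\eta = \eta(\beta, d) > 0$, which is $< c_0/2$ once $\alpha$ is sufficiently close to $1$.

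The main obstacle will be the error estimate in the mixed regimes where one of the three frequencies is small while the other two are comparable and large, since no single Cauchy--Schwarz cleanly decouples the three factors $\widehat{\mu}$. I anticipate a case analysis based on the relative sizes of $|\xi_2|$, $|\xi_3|$, and $|\xi_2 + \xi_3|$; in each case the \emph{anisotropic} decay of $J$ in $(\xi_2, \xi_3)$ must be combined with the singular-energy bound to overcome the loss $(1-\alpha)^{-B}$ in hypothesis $(B)$. This delicate bookkeeping, and in particular the need for a definite positive power of $(|\xi_2|+|\xi_3|)$ to survive after summation, is exactly what forces both the conditions $\beta > 1 - s_0$ and $\alpha > \alpha_0(\beta, C_1, C_2, B)$ appearing in the statement.
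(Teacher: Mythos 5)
Your overall architecture (regularize $\mu$, pass to the Fourier side, split into a low-frequency ``main'' term and a high-frequency ``error'' term, then win by taking $\alpha$ close to $1$) matches the paper's in outline, but two of your key steps do not go through as described.

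First, the positivity of the main term. You claim that as $\alpha\uparrow 1$ the energy bound $\int_{|\xi|\le R}|\hat\mu|^2\,d\xi\lesssim R^{1-\alpha}$ pushes the truncation $\mu_{\le R}$ toward the constant density, on which $\Lambda$ is manifestly positive. This is false: as $\alpha\to 1$ with $R$ fixed, $R^{1-\alpha}\to 1$, so the non-constant part of $\mu_{\le R}$ retains $L^2$ norm of order $C_1^{1/2}$; condition $(A)$ is only an upper bound on $\mu$ and is perfectly consistent with $\mu_{\le R}$ oscillating wildly (indeed $\mu$ is singular, supported on a set of dimension $\alpha<1$). There is no regime of the hypotheses in which the low-frequency piece is close to Lebesgue measure. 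This is precisely the central difficulty of the problem, and the paper's resolution is entirely different: decompose $\mu=\mu_1+\mu_2$ with $\mu_1$ merely \emph{bounded} (by $A\,2^{6B}C_1$, which may be huge), and prove positivity of $\iint \mu_1(x)\mu_1(x+t)\mu_1(x+P(t))\tau_{l}(t)\,dt\,dx$ at \emph{some} scale $l=l_0$ via Bourgain's energy-increment pigeonholing over a sequence of scales $\ell_0<\ell_1<\cdots$, together with the lower bound $\int f\,(f*\vartheta_{k})(f*\vartheta_{\ell})\ge c_0(\int f)^3$. A by-product is that one has no control over which scale works, which is why the theorem only produces configurations with $t$ extremely small; your fixed bump $\psi$ on $(t_0,t_1)$ cannot be maintained.

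Second, the error term. With only the size bound $|J(\xi_2,\xi_3)|\lesssim(1+|\xi_2|+|\xi_3|)^{-1/d}$ (or even the generic stationary-phase bound $|\xi_3|^{-1/2}$), the trilinear sum does not converge: on the diagonal dyadic shell $|\xi_2|\sim|\xi_3|\sim N$, combining $(B)$ with the energy estimate and Cauchy--Schwarz in any order yields at best $N^{2-\alpha-\beta/2-1/2}$, which tends to $N^{0}$ as $\alpha,\beta\to1$ and is worse near the degenerate points of $P''$. No arrangement of Cauchy--Schwarz applied to $|\hat\mu||\hat\mu||\hat\mu||J|$ supplies the missing positive power of $N$. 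The paper's fix is to keep the \emph{oscillation} of $J$: after a stationary-phase expansion $J\approx a(\xi,\eta)\eta^{-1/2}e^{i\Psi(\xi,\eta)}$, it proves a genuinely trilinear oscillatory-integral estimate (Lemma \ref{181106elmma3.6}, following X.~Li and H\"ormander) giving an extra gain $2^{-\gamma k}$ from the non-degeneracy of $\partial_\xi\partial_\eta(\partial_\xi-\partial_\eta)\Psi$. This is packaged as the Sobolev improving bound of Proposition \ref{180816prop3.1}, which also serves to make sense of the trilinear form for the singular measure and to run the limiting argument producing an actual configuration in $E$ (your final step, ``a positive lower bound on $\Lambda$ produces a triple,'' also needs this: one must construct a measure supported on the configuration set, as in Section \ref{181208section6}). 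Without these two ingredients the proposal does not constitute a proof.
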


Later we will see that the value of $t$ in the above theorem is extremely small. When $P(t)$ does not contain any constant term, the value of $P(t)$ will also be small. In this sense, we say that our problem is local. The problem of dealing with a polynomial $P(t)$ containing a constant term is more global, and is not covered here. 

If one takes $P(t)=2t$, then it is a result due to \L aba and the third author \cite{LP09} that every $E\subset [0, 1]$ satisfying the same assumptions as in Theorem \ref{180817thm1.1} contains a non-trivial three-term arithmetic progression, that is $(x, x+t, x+2t)$ for some $t>0$. Moreover, $s_0$ was given explicitly there, which could be $1/3$. Indeed, it was also in \cite{LP09} that the assumptions $(A)$ and $(B)$ first appeared. These assumptions turn out to be very natural in the context of Salem sets. Let us briefly recall the discussion in \cite{LP09}. 

For a set $E\subset [0, 1]$, we let $\dim_{\mc{H}}(E)$ denote the Hausdorff dimension of the set $E$. We define the \emph{Fourier dimension} of $E$ to be the supremum over all $\beta\in [0, 1]$ such that there exists a probability measure $\mu$ supported on $E$ satisfying 
\beq
|\widehat{\mu}(\xi)|\le C(1+|\xi|)^{-\beta/2} \text{ for every } \xi\in \R \text{ and some } C>0.
\endeq
We will use $\dim_{\mc{F}}(E)$ to denote the Fourier dimension of $E$. Regarding the connection between Hausdorff dimensions and Fourier dimensions, it is known that 
\beq\label{181207e1.3}
\dim_{\mc{F}}(E)\le \dim_{\mc{H}}(E) \text{ for every } E.
\endeq
Sets for which the equality in \eqref{181207e1.3} is achieved are called Salem sets. So far there are a number of constructions of Salem sets, due to Salem \cite{Sal50}, Kaufman \cite{Kau81} (see Bluhm \cite{Blu98} for an exposition), Kahane \cite{Kah85}, Bluhm \cite{Blu96}, \L aba and the third author \cite{LP09}, and so on. Many of these constructions are probabilistic constructions. For instance, Kahane \cite{Kah85} showed that images of compact sets under Brownian motion are almost surely Salem sets.

It is worth mentioning that, in Salem's probabilistic construction of Salem sets \cite{Sal50}, with large probability, the examples there (under certain modifications as in \cite{LP09}) obey assumptions $(A)$ and $(B)$. Moreover, \L aba and the third author \cite{LP09} also provided a probabilistic construction of Salem sets, a large portion of which (under a natural measure) satisfy assumptions $(A)$ and $(B)$. \\

We will discuss a few generalisations of the result of \L aba and the third author \cite{LP09}.  In \cite{CLP16}, Chan, \L aba and the third author generalised \cite{LP09} to higher dimensions. Their result covers  a large class of linear patterns. In particular, they proved: Let $a, b, c$ be three points in the plane that are not co-linear. Let $E\subset \R^2$. Assume that $E$ supports a probability measure $\mu$ satisfying analogues of assumptions $(A)$ and $(B)$ in $\R^2$. Then $E$ must contain three distinct points $x, y, z$ such that the triangle $\Delta xyz$ is similar to $\Delta abc$.

The result of \cite{CLP16} was later generalised to certain nonlinear patterns by Henriot, \L aba and the third author \cite{HLP15}. However, their result does not cover the case of dimension one. For instance, it was pointed out by the authors of \cite{HLP15} that the configuration $(x, x+t, x+t^2)$ with $x, t\in \R$ can not be detected by their method. In the current paper, we provide an affirmative answer to this question. \\

We turn to the proof of Theorem \ref{180817thm1.1}. Let $\tau_0$ be a non-negative smooth bump function supported on the interval $[1, 2]$, and $\tau_l(t):=\tau_0(2^l t)$. It is not difficult to imagine that the trilinear form 
\beq\label{190111e1.5}
\iint \mu(x)\mu(x+t)\mu(x+P(t))\tau_{l}(t)dt dx
\endeq
will play a crucial role in the study of patterns as in our main theorem. However, as $\mu$ is just a measure, the above trilinear form may not be well-defined at the first place. Our first task is to make sense of this trilinear form for every integer $l$ that is large enough. 

Let $s$ be a real number. Define a Sobolev norm 
\beq
\|f\|_{H^s}:=\big(\int_{\R} |\hat{f}(\xi)|^2 (1+|\xi|^2)^{s}d\xi\big)^{1/2}.
\endeq
For $l\in \N$, and two Schwartz functions $f$ and $g$, define 
\beq
T_l(f, g)(x):=\int_{\R} f(x+t)g(x+P(t))\tau_l(t)dt.
\endeq
We will prove 
\begin{prop}\label{180816prop3.1}
There exists a small constant $s_0>0$ and large constant $l_0>0$ and $\gamma_0>0$, depending only on $P(t)$, such that 
\beq\label{180816e3.5}
\|T_l(f, g)\|_{H^{s_0}} \le 2^{\gamma_0 l} \|f\|_{H^{-s_0}}  \|g\|_{H^{-s_0}},
\endeq
for every $l\ge l_0$, and for Schwartz functions $f$ and $g$. 
\end{prop}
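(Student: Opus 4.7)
The plan is to move to the Fourier side and reduce Proposition~\ref{180816prop3.1} to a weighted $L^2$ estimate for the bilinear symbol
\[
m_l(\xi,\eta):=\int e^{i(\xi t+\eta P(t))}\tau_l(t)\,dt,
\]
so that $T_l(f,g)(x)=\iint\hat f(\xi)\hat g(\eta)e^{i(\xi+\eta)x}m_l(\xi,\eta)\,d\xi\,d\eta$. By Plancherel in $x$,
\[
\|T_l(f,g)\|_{H^{s_0}}^2=\int(1+|\zeta|^2)^{s_0}\Bigl|\int\hat f(\xi)\hat g(\zeta-\xi)\,m_l(\xi,\zeta-\xi)\,d\xi\Bigr|^2 d\zeta,
\]
and a Cauchy--Schwarz in $\xi$, pairing the weight $(1+|\xi|)^{s_0}(1+|\zeta-\xi|)^{s_0}$ with $m_l$ and its reciprocal with $\hat f\hat g$, reduces the proposition to the uniform-in-$\zeta$ pointwise bound
\[
(1+|\zeta|)^{2s_0}\int|m_l(\xi,\zeta-\xi)|^2(1+|\xi|)^{2s_0}(1+|\zeta-\xi|)^{2s_0}\,d\xi \;\lesssim\; 2^{2\gamma_0 l}.
\]

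The next step is to obtain several decay estimates for $m_l$ by oscillatory-integral techniques. Let $d=\deg P\ge 2$. Because the $d$-th $t$-derivative of the phase $\xi t+\eta P(t)$ is the nonzero constant $d!\,a_d\,\eta$, the higher-order van der Corput lemma gives the universal bound $|m_l(\xi,\eta)|\lesssim|\eta|^{-1/d}$; combined with the trivial $|m_l|\lesssim 2^{-l}$ this yields $|m_l|\lesssim\min(2^{-l},|\eta|^{-1/d})$. When $|\xi|$ is large compared to $2^l$ and to $|\eta|\cdot\max_{t\in\supp\tau_l}|P'(t)|$, the phase has no stationary point on $\supp\tau_l$ and iterated integration by parts in $t$ produces, for every $N\in\N$, rapid decay of the form
\[
|m_l(\xi,\eta)|\lesssim_N 2^{-l}\Bigl(\tfrac{2^l+|\eta|\cdot\max_{t\in\supp\tau_l}|P'(t)|}{|\xi|}\Bigr)^N.
\]

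The third step inserts these estimates into the weighted $L^2$ bound after a dyadic decomposition $|\xi|\sim 2^n$, $|\eta|=|\zeta-\xi|\sim 2^m$. In the non-stationary regime $2^n\gg 2^l+2^m\cdot\max|P'|$ on $\supp\tau_l$, the rapid $\xi$-decay absorbs any polynomial weight once $N$ is chosen large enough. In the high-frequency regime $2^m\gg 2^{dl}$, the bound $|m_l|\lesssim 2^{-m/d}$ dominates the Sobolev weight $2^{2s_0 m}$ provided $s_0<1/(2d)$, producing geometric decay in $m$. On the remaining core region, where $|\xi|\lesssim 2^l+|\eta|\cdot\max|P'|$ and $|\eta|\lesssim 2^{dl}$, one falls back on the trivial $|m_l|\lesssim 2^{-l}$; the $\xi$-measure of this set is polynomially bounded in $2^l$ with exponent depending only on $\deg P$, and all three weights contribute at most $2^{O(s_0 dl)}$, so its total contribution is $\lesssim 2^{\gamma_0 l}$ for some $\gamma_0=\gamma_0(P)$ once $s_0$ is small enough. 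Summing over the $O(l^2)$ relevant dyadic pairs, and using $l\ge l_0$ to ensure $P(t)$ is well-approximated by its monomial terms on $\supp\tau_l$ so that the scaling analysis is clean, completes the proof.

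The main obstacle I anticipate is the resonant strip where $\xi+\eta P'(t)$ nearly vanishes on $\supp\tau_l$. In that strip neither the non-stationary integration-by-parts bound nor the monotone first-derivative van der Corput estimate is available, so one is forced to use the weakest bound $|m_l|\lesssim|\eta|^{-1/d}$. Verifying that this decay, balanced against the strip's $\xi$-width (which grows linearly in $|\eta|$) and against the Sobolev weights $(1+|\xi|)^{2s_0}(1+|\eta|)^{2s_0}(1+|\zeta|)^{2s_0}$, still delivers the required uniform bound $\lesssim 2^{2\gamma_0 l}$ is precisely what forces $s_0$ to be small depending on $d=\deg P$ and determines the admissible $\gamma_0$. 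This interplay between $s_0$ and the degree of $P$ is the essentially new feature distinguishing the polynomial setting from the linear \L aba--Pramanik case.
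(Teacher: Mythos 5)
Your reduction via Cauchy--Schwarz in $\xi$ to the uniform-in-$\zeta$ weighted bound $(1+|\zeta|)^{2s_0}\int|m_l(\xi,\zeta-\xi)|^2(1+|\xi|)^{2s_0}(1+|\zeta-\xi|)^{2s_0}\,d\xi\lesim 2^{2\gamma_0 l}$ is where the argument breaks: that bound is false for every $s_0>0$, and the failure occurs exactly on the resonant strip you flag at the end. On the set where the phase $\xi t+\eta P(t)$ has a stationary point in $\supp(\tau_l)$, i.e.\ $\xi=-P'(t)\eta$ for some $t\in[2^{-l},2^{-l+1}]$, stationary phase (Lemma \ref{180719lem3.3}) gives $|m_l(\xi,\eta)|\simeq 2^{O(l)}|\eta|^{-1/2}$ (your van der Corput bound $|\eta|^{-1/d}$ is even weaker when $d>2$). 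For fixed $\eta$ this strip has $\xi$-width $\simeq 2^{-cl}|\eta|$, so $\int_{\mathrm{strip}}|m_l|^2\,d\xi\simeq 2^{O(l)}|\eta|^{-1}\cdot|\eta|=2^{O(l)}$, with no decay whatsoever in $|\eta|$; multiplying by the three Sobolev weights, each of size $|\eta|^{2s_0}$ there, produces a quantity growing like $|\eta|^{6s_0}$ (or $|\eta|^{4s_0}$ when $a_1=1$ and $\zeta$ happens to be small), which is unbounded in $\zeta$. The balance you propose to ``verify'' in your final paragraph is exactly borderline at $s_0=0$: absolute-value estimates on the symbol can only ever yield $L^2\times L^2\to L^2$ boundedness and cannot produce the smoothing $H^{-s_0}\times H^{-s_0}\to H^{s_0}$ with $s_0>0$ that the proposition asserts --- and that the rest of the paper genuinely needs in order to make sense of the trilinear form for measures.

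The missing idea is that on the resonant strip one must exploit the oscillation of the symbol, not merely its size. After stationary phase the main term is $a(\xi,\eta)\,\eta^{-1/2}e^{i\Psi(\xi,\eta)}$ with $\Psi(\xi,\eta)=2^{-l}t_c\xi+P(2^{-l}t_c)\eta$, and the paper's proof hinges on the nondegeneracy $|\partial_\xi\partial_\eta(\partial_\xi-\partial_\eta)\Psi|\gtrsim 2^{-\gamma k}$ away from finitely many exceptional directions $\xi/\eta$ (Claim \ref{181106claim3.5}), which feeds a $TT^*$/Cauchy--Schwarz scheme in the style of Li's work on bilinear Hilbert transforms along curves (Lemma \ref{181106elmma3.6} and Lemma \ref{181106elmma3.7}, the latter being H\"ormander's $L^2$ oscillatory integral bound) to extract the crucial gain $2^{-\gamma k}$ over the trivial estimate at frequency $2^k$. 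Your non-stationary regime (handled by integration by parts, matching Lemma \ref{170724lemma3.1}) and your low-frequency core region $|\eta|\lesim 2^{O(l)}$ are fine, but without some version of this bilinear oscillatory-integral input the resonant high-frequency case cannot be closed.
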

This is called a Sobolev improving estimate. We are able to use Proposition \ref{180816prop3.1} to make sense of the  double integral in \eqref{190111e1.5}. Let $\mu$ be a probability measure supported on the interval $[0, 1]$. If we also assume that 
\beq
|\hat{\mu}(\xi)|\le C |\xi|^{-\frac{\beta}{2}}, \text{ for } \beta\in ( 1-s_0,1),
\endeq
and for some constant $C>0$, then $\mu\in H^{-s_0}(\R),$ which is a Sobolev space of some negative order. Recall that Schwartz functions are dense in $H^s$ for every $s\in \R$. By a density argument, we know that the double integral  in \eqref{190111e1.5} is well-defined. To be precise,  we will pick a sequence of Schwartz functions $\{f_n\}_{n=1}^{\infty}$ that convergences to $\mu$ in $H^{-s_0}$, and interpret \eqref{190111e1.5} as 
\beq
\lim_{n\to \infty} \iint f_n(x)f_n(x+t) f_n(x+P(t))\tau_l(t)dtdx.
\endeq
That the above limit exists is guaranteed by Proposition \ref{180816prop3.1}. 
\\

After making sense of the double integral in \eqref{190111e1.5}, we will prove that it is always positive. That is, we will prove 
\begin{thm}\label{main-result}
Under the same assumptions as in Theorem \ref{180817thm1.1}, we are able to find a large integer $l_0\in \N$ and a small positive real number $c_0>0$ such that
\beq\label{180326e1.2}
\iint \mu(x)\mu(x+t)\mu(x+P(t))\tau_{l_0}(t)dt dx\ge c_0.
\endeq
\end{thm}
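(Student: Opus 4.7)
\emph{Decomposition.} Following \cite{LP09}, fix a non-negative Schwartz bump $\phi$ with $\int\phi = 1$ and, for a scale $\delta > 0$ to be chosen, put $\phi_\delta(x) := \delta^{-1}\phi(x/\delta)$. Write $\mu = \mu_1 + \mu_2$ with $\mu_1 := \mu * \phi_\delta$ smooth and $\mu_2 := \mu - \mu_1$ the rough remainder. Setting
\[
\Lambda_l(f,g,h) := \iint f(x)\,g(x+t)\,h(x+P(t))\,\tau_l(t)\,dt\,dx
\]
and interpreting $\Lambda_{l_0}(\mu,\mu,\mu)$ through the Schwartz-density argument that follows Proposition \ref{180816prop3.1}, expand trilinearly into $2^3 = 8$ pieces $\Lambda_{l_0}(\mu_a,\mu_b,\mu_c)$ indexed by $(a,b,c)\in\{1,2\}^3$. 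The strategy is to lower-bound the pure piece $\Lambda_{l_0}(\mu_1,\mu_1,\mu_1)$ and show that each of the seven mixed pieces is negligible.

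\emph{Main term.} Since $\mu_1 \ge 0$, $\int\mu_1 = 1$, and $\supp\mu_1 \subset [-\delta,1+\delta]$, Cauchy-Schwarz together with Jensen's inequality applied to the probability density $\mu_1\,dx$ give $\int \mu_1^3 \ge (\int\mu_1^2)^2 \ge (1+2\delta)^{-2} \ge 1/4$. For $t \in \supp\tau_{l_0}$ we have $|t|,|P(t)|\lesssim 2^{-l_0}$ (recall $P(0) = 0$ and $l_0$ will be large); combined with $\|\mu_1\|_\infty \lesssim \delta^{\alpha-1}$ and $\|\mu_1'\|_\infty \lesssim \delta^{\alpha-2}$, both consequences of assumption (A) and the definition of $\phi_\delta$, a first-order Taylor expansion of $\mu_1(x+t)\mu_1(x+P(t))$ around $\mu_1(x)^2$ yields
\[
\Lambda_{l_0}(\mu_1,\mu_1,\mu_1) \;=\; 2^{-l_0}\|\tau_0\|_1\int\mu_1^3\,dx \;+\; O\bigl(\delta^{3\alpha-4}\cdot 2^{-2l_0}\bigr),
\]
which is $\ge c_1 \cdot 2^{-l_0}$ for some $c_1 > 0$ as soon as $\delta$ is slightly larger than $2^{-l_0}$ and $\alpha$ is close enough to $1$.

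\emph{Error terms.} For each of the seven mixed pieces, place $\mu_2$ in the first slot (by relabelling) and invoke Proposition \ref{180816prop3.1} via the duality $\Lambda_l(f,g,h) = \int f\,T_l(g,h)\,dx$:
\[
|\Lambda_{l_0}(\mu_2,g,h)| \le \|\mu_2\|_{H^{-s_0}}\|T_{l_0}(g,h)\|_{H^{s_0}} \le 2^{\gamma_0 l_0}\|\mu_2\|_{H^{-s_0}}\|g\|_{H^{-s_0}}\|h\|_{H^{-s_0}}.
\]
Since $\widehat{\mu_2}(\xi) = \widehat{\mu}(\xi)\bigl(1-\widehat{\phi}(\delta\xi)\bigr)$ with $|1-\widehat{\phi}(\delta\xi)| \lesssim \min(1,\delta|\xi|)$, a dyadic decomposition of the $\xi$-integral at $|\xi| \sim 1/\delta$, coupled with the extension of hypothesis (B) to all real $\xi$ (via the compact support of $\mu$), gives
\[
\|\mu_2\|_{H^{-s_0}} \lesssim (1-\alpha)^{-B}\,\delta^{(\beta+2s_0-1)/2},\qquad \|\mu_1\|_{H^{-s_0}},\;\|\mu\|_{H^{-s_0}} \lesssim (1-\alpha)^{-B},
\]
the relevant $\xi$-integrals being finite thanks to $\beta+2s_0 > 1$, which follows from $\beta > 1-s_0$. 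Each of the seven mixed pieces is therefore bounded by $C(1-\alpha)^{-3B}\,2^{\gamma_0 l_0}\,\delta^{(\beta+2s_0-1)/2}$.

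\emph{Balancing and the main obstacle.} Fix $l_0$ large (depending only on $P$), then choose $\delta$ as a small power of $2^{-l_0}$, just above the threshold required by the main-term bound, and finally pick $\alpha_0$ close enough to $1$ that the factor $(1-\alpha)^{-3B}$ is absorbed by $\delta^{(\beta+2s_0-1)/2}\cdot 2^{l_0}$ in the ratio error/main. The main obstacle is precisely this tight three-way calibration: the Sobolev improving estimate costs a factor $2^{\gamma_0 l_0}$, the smoothing $\mu\mapsto \mu_1$ gains only the small power $\delta^{(\beta+2s_0-1)/2}$, and the Fourier-decay constant degrades as $(1-\alpha)^{-B}$; making all of these fit simultaneously while keeping $\delta\gg 2^{-l_0}$ forces $\alpha_0$ to be chosen exponentially close to $1$ as a function of $l_0$, $B$, $s_0$, $\gamma_0$ and the other parameters. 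Once this calibration is in place the main term dominates the sum of the seven errors, yielding $\Lambda_{l_0}(\mu,\mu,\mu) \ge c_0 := \tfrac12 c_1\cdot 2^{-l_0} > 0$ and establishing (\ref{180326e1.2}).
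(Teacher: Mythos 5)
Your overall architecture (mollify, lower-bound the smooth piece, kill the seven mixed pieces with the Sobolev improving estimate, then tune parameters) matches the paper's, but the way you handle the main term creates a quantitative incompatibility that cannot be fixed by any choice of parameters, so the proof as written does not close.

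The problem is that you tie the mollification scale $\delta$ to $2^{-l_0}$. Your Taylor-expansion lower bound for $\Lambda_{l_0}(\mu_1,\mu_1,\mu_1)$ requires the error $\|\mu_1\|_\infty\|\mu_1'\|_\infty\, 2^{-l_0}\approx \delta^{2\alpha-3}2^{-l_0}$ to be small compared with the main term $\approx 2^{-l_0}$, which forces $\delta\gtrsim 2^{-l_0}$ (up to a harmless power). On the other hand, each mixed piece is bounded by $(1-\alpha)^{-3B}\,2^{\gamma_0 l_0}\,\delta^{(\beta+2s_0-1)/2}$, and this must be $\ll 2^{-l_0}$. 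Since $\beta<1$, the gain exponent satisfies $(\beta+2s_0-1)/2<s_0$, where $s_0$ is the \emph{small} constant of Proposition \ref{180816prop3.1}, while $\gamma_0$ is a \emph{large} constant. With $\delta\approx 2^{-\epsilon l_0}$ for any $\epsilon\le 1$ the error/main ratio is at least $2^{l_0(\gamma_0+1-\epsilon s_0)}$, which tends to $+\infty$ as $l_0\to\infty$ and is $\ge 1$ already for moderate $l_0$; sending $\alpha\to 1$ only makes the prefactor $(1-\alpha)^{-3B}$ worse, so it cannot rescue the calibration. To make the errors small you would need $\delta\lesssim 2^{-2(\gamma_0+1)l_0/s_0}$, which destroys your main-term expansion since then $\|\mu_1'\|_\infty\approx\delta^{\alpha-2}$ dwarfs $2^{l_0}$. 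The "tight three-way calibration" you flag as the main obstacle is in fact unachievable in this form.

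The paper resolves this by decoupling the two scales. The mollification scale is $1/N$ with $N=C_2^{-1}e^{1/(1-\alpha)}$, chosen so that $N^{1-\alpha}$ stays bounded and hence $0\le\mu_1\le M:=A\,2^{6B}C_1$ \emph{uniformly in} $\alpha$ (this is the only place assumption (A) enters). The lower bound $\Lambda_{l_0}(\mu_1,\mu_1,\mu_1)\ge c_0$ is then \emph{not} obtained by Taylor expansion — $\mu_1$ oscillates at the enormous scale $1/N\ll 2^{-l_0}$, so no pointwise comparison of $\mu_1(x+P(t))$ with $\mu_1(x)$ is available. Instead, Lemma \ref{bourgain-lemma} runs Bourgain's energy pigeonholing: one compares $f$ with $f*\vartheta_{\ell'}$ and $f*\vartheta_{\ell''}$ at auxiliary scales, uses Lemma \ref{lemma:bourgain} for the fully smoothed trilinear form, the Sobolev improving estimate for the high-frequency tail, and the bounded total energy $\sum_k\|f*\vartheta_{\ell_k}-f*\vartheta_{\ell_{k+1}}\|_2^2\lesssim M^2$ to conclude that \emph{some} $l_0$ in a range depending only on $C_1,C_2,B,\beta,P$ (and crucially not on $\alpha$ or $N$) works. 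Only after $l_0$ and $c_0$ are fixed is $\alpha$ sent to $1$, making $N$ huge and $\|\mu_2\|_{H^{-s_0}}\lesssim (1-\alpha)^{-B}N^{(1-\beta-2s_0)/2}$ small enough to beat the now-fixed constant $2^{\gamma_0 l_0}$. You need to replace your main-term argument with an argument of this pigeonholing type (or otherwise produce $l_0$ independently of the mollification scale); the rest of your outline is then sound.
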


Intuitively speaking, if $E$ does not contain any three term configuration $(x, x+t, x+P(t))$, then the left hand side of \eqref{180326e1.2} would certainty vanish. However, as we dealing with measures supported on sets of fractional dimensions, we need some extra work to make the above argument rigorous. Roughly speaking, we will construct a Borel measure $\nu$ defined on $[0, 1]^2$ and supported on the set of configurations $(x, x+t, x+P(t))$ with $t>0$, such that $\nu([0, 1]^2)>0$. This will guarantee the existence of the desired polynomial pattern. This will be carried out in the last section. \\

The authors would also like to draw the attention of interested readers to a very recent and interesting development due to Krause \cite{Kra19} on the same problem. \\

{\bf Organisation of paper.} The Sobolev improving estimate in Proposition \ref{180816prop3.1} will be proven in Section \ref{181208section2}. The main tools we will be using include the stationary phase principle and techniques from bilinear oscillatory integrals recently developed by Li \cite{Li13}. In Section \ref{181208section3} we provide a proof of the stationary phase principle that is used in the current paper. Theorem \ref{main-result} will be proven in Section \ref{181208section4} and Section \ref{181208section5}.  The argument that is used in this step relies on the idea of measure decomposition of \L aba and the second author \cite{LP09}, on the Sobolev improving estimate in Proposition \ref{180816prop3.1} and on Bourgain's energy pigeonholing argument from \cite{Bou88}. Finally, in Section \ref{181208section6} we will finish the proof of Theorem \ref{180817thm1.1}. \\

{\bf Notation.} Throughout the paper, we will write $x\lesim y$ to mean that there exists a universal constant $C$ such that $x\le C y$, and $x\simeq y$ to mean that $x\lesim y$ and $y\lesim x$. Moreover, $x\lesim_{M, N} y$ means there exists a constant $C_{M, N}$ depending on the parameters $M$ and $N$ such that $x\le C_{M, N} y$. \\

{\bf Acknowledgements.} This material is based upon work partially supported by the National Science Foundation under Grant No. DMS-1440140 while the authors were in residence at the Mathematical Sciences Research Institute in Berkeley, California, during the Spring semester of 2017. The first author is supported by the NSF under grant Number 1803086. The second author is also partially supported by a direct grant for research (4053295)
from the Chinese University of Hong Kong. The third also is also partially supported by an NSERC discovery grant.

\section{\bf Sobolev improving estimate: Proof of Proposition \ref{180816prop3.1}}\label{181208section2}

In this section we prove Proposition \ref{180816prop3.1}. 
Let $P: \R\to \R$ be a polynomial of degree bigger than one without constant term. We write it as 
\beq
P(t)=a_n t^{\alpha_n}+ \dots + a_2 t^{\alpha_2}+a_1 t^{\alpha_1},
\endeq
with $1\le \alpha_1<\alpha_2<\dots<\alpha_n$. Here we assume that $a_i\neq 0$ for every $i\in \{1, 2, \dots, n\}$. 
Moreover, we assume that $\alpha_1=1$, that is, our polynomial $P$ contains a linear term. The corresponding result for a polynomial without linear term is much easier to prove. This point will be elaborated in a few lines.\\

For each $1\le i\le n$, let $b_i$ be the unique integer such that 
\beq
2^{b_i}\le |a_i| <2^{b_{i}+1}.
\endeq
Let $\Gamma_0$ be a large number that depends on the polynomial $P$.
Let $l_0\in \N$ be the smallest integer such that for every $l\ge l_0$, the following hold:
\beq\label{180816e3.3}
|a_i| 2^{-l\cdot \alpha_i} \le \Gamma^{-1}_0 |a_1 2^{-\alpha_1 l}| \text{ for every  } n\ge i\ge 2,
\endeq
and 
\beq\label{180816e3.4}
|a_i| 2^{-l\cdot \alpha_i} \le \Gamma^{-1}_0 |a_2 2^{-l\cdot \alpha_2}| \text{ for every  } n\ge i\ge 3.
\endeq
In other words, at the scale $t\simeq 2^{-l}$, the monomial $a_1 t$ ``dominates" the polynomial $P(t)$, and $a_2 t^{\alpha_2}$ is the second dominating term. It is not difficult to see that the choice of $l_0$ depends only on $P(t)$. \\

Let us pause and make a remark on the assumption that $\alpha_1=1$. As mentioned above, the case $\alpha_1>1$ is relatively easier to handle. This is because certain curvature (in the sense of oscillatory integrals) appears naturally in this case. To be more precise, under the assumption that $\alpha_1\ge 2$, we first choose $l$ large enough such that  \eqref{180816e3.3} holds, and then the monomial $a_1 t^{\alpha_1}$ dominates the polynomial $P(t)$ at the scale $t\simeq 2^{-l}$. Notice that certain curvature is already present when $a_1 t^{\alpha_1}$ dominates. Hence the requirement \eqref{180816e3.4} becomes redundant. 

However, under the assumption that $\alpha_1=1$, if we only require \eqref{180816e3.3}, then there is no curvature in the dominating term $a_1 t$. This is why we need to further require \eqref{180816e3.4} and find a second dominating term. It is hoped that the curvature in the second dominating term will play an equivalent role. Due to the presence of the linear term $a_1 t$, a number of extra complications will appear. \\

The rest of this section is devoted to the proof of Proposition \ref{180816prop3.1}. Let $h$ be a function in $H^{-s_0}$. We pair it with the left hand side of \eqref{180816e3.5} and study 
\beq
\int_{\R} \Big[\int_{\R} f(x+t)g(x+P(t))\tau_l(t)dt \Big] h(x)dx.
\endeq
Let $\psi_0: \R\to \R$ be a non-negative smooth bump function supported on $[-3, -1]\cup [1, 3]$. Define $\psi_k(\cdot )=\psi_0(\cdot/2^k)$. Moreover, we choose $\psi_0$ such that 
\beq
1=\sum_{k\in \Z} \psi_k(t), \text{ for every } t\neq 0.
\endeq 
For all the three functions $f, g$ and $h$, we apply the non-homogeneous Littlewood-Paley decomposition  $
\mathbbm{1}= \sum_{k\in \N} P_k, $
where $\mathbbm{1}$ denotes the identity operator, and study
\beq
\sum_{k_1, k_2, k_3=0}^{\infty} \iint_{\R^2} P_{k_1}f(x+t)P_{k_2}g(x+P(t))\tau_l(t) P_{k_3}h(x)dt dx.
\endeq
Here 
\beq
P_k f(x):=\int_{\R} e^{ix\xi}\psi_k(\xi)\hat{f}(\xi)d\xi, \text{ if } k>0,
\endeq
and 
\beq
P_0 f(x):=\int_{\R} e^{ix\xi}(\sum_{k\le 0}\psi_k(\xi))\hat{f}(\xi)d\xi.
\endeq
In the following, we work on two cases
\beq
|(k_1-l)-(k_2-l+b_1)|\ge 100 \text{ and } |(k_1-l)-(k_2-l+b_1)|< 100.
\endeq
Let us begin with the first case. Our goal is to prove
\begin{lem}\label{170724lemma3.1}
There exists a constant $\gamma_0\in \N$ depending on $P$, such that under the assumption that $|k_1-k_2-b_1|\ge 100$, we have 
\beq\label{180326e3.13}
\begin{split}
& \left|\iint_{\R^2} P_{k_1}f(x+t)P_{k_2}g(x+P(t))\tau_l(t) P_{k_3}h(x)dt dx \right| \\
\lesim_{N} & 2^{\gamma_0 l} 2^{-N(k_1+k_2+k_3)}\|P_{k_1} f\|_2\|P_{k_2} g\|_2\|P_{k_3}h\|_2,
\end{split}
\endeq
for arbitrarily large $N\in \N$. 
\end{lem}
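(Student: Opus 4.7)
The plan is a standard non-stationary phase attack on the $t$-integral, which is non-degenerate in the non-resonant regime $|k_1-k_2-b_1|\ge 100$. First, I would expand each Littlewood--Paley piece via the Fourier inversion formula and integrate out $x$; the constraint $\xi_1+\xi_2+\xi_3=0$ reduces the left-hand side of \eqref{180326e3.13} to
\[
\iint I(\xi_1,\xi_2)\,\widehat{P_{k_1}f}(\xi_1)\,\widehat{P_{k_2}g}(\xi_2)\,\widehat{P_{k_3}h}(-\xi_1-\xi_2)\,d\xi_1\,d\xi_2,
\]
where $I(\xi_1,\xi_2):=\int e^{i\Phi(t)}\tau_l(t)\,dt$ and $\Phi(t)=\xi_1 t+\xi_2 P(t)$.

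The heart of the matter is a lower bound for $|\Phi'(t)|$ on $\supp\tau_l$. Writing $P'(t)=a_1+\sum_{i\ge 2}a_i\alpha_i t^{\alpha_i-1}$ and invoking \eqref{180816e3.3}, one gets $|P'(t)-a_1|\le C\Gamma_0^{-1}|a_1|$ for $t\simeq 2^{-l}$. The non-resonance hypothesis separates $|\xi_1|$ from $|\xi_2 a_1|$ by a factor of at least $2^{100}$, so $|\xi_1+\xi_2 a_1|\gtrsim M$ with $M:=\max(2^{k_1},2^{k_2+b_1})$. Choosing $\Gamma_0$ large enough absorbs the nonlinear correction and yields $|\Phi'(t)|\gtrsim M$ uniformly on $\supp\tau_l$. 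Similar bookkeeping gives $|\Phi^{(j)}(t)|\lesim M\cdot 2^{l(j-1)}\Gamma_0^{-1}$ for $j\ge 2$, controlling the curvature losses under integration by parts. Rescaling $t=2^{-l}s$ and applying the standard non-stationary phase lemma then produces
\[
|I(\xi_1,\xi_2)|\lesim_N 2^{-l}\,\bigl(1+2^{-l}M\bigr)^{-N}
\]
for every $N\in\N$.

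To close the estimate, observe first that since $\widehat{P_{k_3}h}$ is supported in $|\xi|\simeq 2^{k_3}$ while the integrand forces $-\xi_1-\xi_2$ into a set of size $\lesim 2^{\max(k_1,k_2)}$, the integral vanishes unless $k_3\le\max(k_1,k_2)+O(1)$; in that case $M\simeq 2^{\max(k_1,k_2,k_3)+O(1)}$. Next, Cauchy--Schwarz in $\xi_2$ (with a change of variable for the $\widehat{P_{k_3}h}$ factor) followed by Cauchy--Schwarz in $\xi_1$ bounds the $L^1$-integral of $|\widehat{P_{k_1}f}\,\widehat{P_{k_2}g}\,\widehat{P_{k_3}h}(-\xi_1-\xi_2)|$ by $2^{\min(k_1,k_2)/2}\|P_{k_1}f\|_2\|P_{k_2}g\|_2\|P_{k_3}h\|_2$. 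Combining these facts with the pointwise bound on $I$ and using $\max(k_1,k_2,k_3)\ge \tfrac13(k_1+k_2+k_3)$, one picks $N$ comparable to a multiple of the target decay exponent; the resulting polynomial $l$-losses are absorbed into the single factor $2^{\gamma_0 l}$, yielding \eqref{180326e3.13}.

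The only non-routine step is the lower bound on $|\Phi'(t)|$ above. In the linear-dominant regime $\alpha_1=1$ one must verify that the nonlinear contributions to $P'(t)$ cannot reinstate the cancellation between $\xi_1$ and $\xi_2 a_1$ created by the non-resonance gap; this is precisely where the careful choices \eqref{180816e3.3} and a large $\Gamma_0$ are indispensable. The remaining steps (non-stationary phase and Plancherel/Cauchy--Schwarz bookkeeping) are routine.
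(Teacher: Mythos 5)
Your argument is correct and follows essentially the same route as the paper: pass to the Fourier side, bound the inner $t$-integral by non-stationary phase using the gap $|k_1-k_2-b_1|\ge 100$ together with \eqref{180816e3.3}, note that the integral vanishes unless $k_3\le\max(k_1,k_2)+O(1)$, and finish with Cauchy--Schwarz in the frequency variables; you merely spell out the lower bound $|\Phi'(t)|\gtrsim M$ that the paper's integration-by-parts step leaves implicit. The only caveat is the low-frequency edge case $k_1=0$ or $k_2=0$, where the dyadic separation of $|\xi_1|$ from $|a_1\xi_2|$ can fail, but there $k_1+k_2+k_3=O_P(1)$ and the trivial Cauchy--Schwarz bound already yields \eqref{180326e3.13}.
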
 
Assuming the above lemma, we have 
\beq
\begin{split}
& \sum_{\substack{k_1, k_2, k_3\\|k_1-k_2-b_1|\ge 100}} \left|\iint_{\R^2} P_{k_1}f(x+t)P_{k_2}g(x+P(t))\tau_l(t) P_{k_3}h(x)dt dx\right|\\
&\lesim_l \sum_{k_1, k_2, k_3=0}^{\infty}2^{-10(k_1+k_2+k_3)}\|P_{k_1}f\|_2\|P_{k_2}g\|_2 \|P_{k_3}h\|_2\lesim_l \|f\|_{H^{-s_0}}\|g\|_{H^{-s_0}}\|h\|_{H^{-s_0}},
\end{split}
\endeq
for some $s_0>0$. For instance, we may take $s_0=1$ at this step. 

\begin{proof}[Proof of Lemma \ref{170724lemma3.1}]
The proof is via an integration by parts. Turning to the Fourier side, we can write the left hand side of \eqref{180326e3.13} as 
\beq\label{170724e3.8}
2^{-l} \Big|\iint \widehat{P_{k_1}f}(\xi)\widehat{P_{k_2}g}(\eta) \widehat{P_{k_3}h}(\xi+\eta)\left[\int_{\R} e^{i2^{-l}t\xi+iP(2^{-l}t)\eta}\tau_0(t)dt \right]d\xi d\eta\Big|.
\endeq
First of all, we observe that
\beq
\eqref{170724e3.8}=0 \text{ when } k_3\ge k_1+k_2+10.
\endeq
Hence in the rest of the proof, we assume that $k_3\le k_1+k_2+10$, and it suffices to prove 
\beq
\eqref{170724e3.8} \lesim_l 2^{-N(k_1+k_2)}\|P_{k_1}f\|_2\|P_{k_2}g\|_2 \|P_{k_3}h\|_2.
\endeq
Here $N\in \N$ is a large integer that might vary from line to line. By an integration by parts, we obtain 
\beq
\left|\int_{\R} e^{i2^{-l}t\xi+iP(2^{-l}t)\eta}\tau_0(t)dt \right|\lesim_l 2^{-N\max\{k_1, k_2\}}.
\endeq
Substitute the above pointwise bound into \eqref{170724e3.8}, and apply H\"older's inequality in the $\xi$ and $\eta$ variables. This will finish the proof of the desired estimate. It is easy to track the dependence on $l$ and see that it is polynomial in $2^l$. \\
\end{proof}

From now on we may assume that $|k_1-k_2-b_1|<100$. Without loss of generality, we take $k_1=k_2+b_1$, and consider 
\beq\label{170724e3.12}
\sum_{k_1, k_3=0}^{\infty} \iint_{\R^2} P_{k_1}f(x+t)P_{k_1-b_1}g(x+P(t))\tau_l(t) P_{k_3}h(x)dt dx
\endeq
In the double sum over $k_1$ and $k_3$, we may impose the extra condition that $k_3\le 2 (k_1+|b_1|)$, as otherwise the corresponding term from \eqref{170724e3.12} will simply vanish.
\begin{lem}\label{170724lemma3.2}
There exists a constant $\gamma_0\in \N$ and $\gamma>0$, both of which are allowed to depend on $P$, such that
\beq
\left|\iint_{\R^2} f(x+t)g(x+P(t))\tau_l(t) h(x)dt dx\right| \le  2^{\gamma_0 l}2^{-\gamma k} \|f\|_2 \|g\|_2\|h\|_2,
\endeq
 for every $l\ge l_0$ and every $k\in \N$, under the assumption that  $\supp(\hat{f})\subset \pm [2^k, 2^{k+1}]$ and $\supp(\hat{g})\subset \pm [2^{k-b_1}, 2^{k-b_1+1}]$ and no further assumption on the function $h$.
\end{lem}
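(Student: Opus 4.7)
The plan is to pass to the Fourier side. Expressing $f,g,h$ via Fourier inversion, the trilinear form becomes
\begin{equation*}
\iint \hat{f}(\xi)\hat{g}(\eta)\hat{h}(\xi+\eta)\, m_l(\xi, \eta)\, d\xi\, d\eta, \qquad m_l(\xi, \eta) = 2^{-l}\int_{\R} e^{i\Phi(s)} \tau_0(s)\, ds,
\end{equation*}
with $\Phi(s) = 2^{-l}s\xi + \sum_{i=1}^n a_i 2^{-l\alpha_i} s^{\alpha_i}\eta$. Under the frequency supports $|\xi|\simeq 2^k$ and $|\eta|\simeq 2^{k-b_1}$, the two linear contributions from $\xi$ and $a_1\eta$ are both of size $\simeq 2^{k-l}$, the next-dominating contribution $a_2 2^{-l\alpha_2} s^{\alpha_2}\eta$ has size $\simeq 2^{b_2-b_1+k-l\alpha_2}$, and by \eqref{180816e3.3}--\eqref{180816e3.4} all remaining monomials are strictly smaller on $s\in[1,2]$.

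I would decompose the $(\xi,\eta)$-plane into an off-diagonal region $\{|\xi+a_1\eta|\ge 2^{10}|\alpha_2 a_2|\,2^{-l(\alpha_2-1)}|\eta|\}$ and its complement. On the off-diagonal region $|\Phi'(s)|\simeq 2^{-l}|\xi+a_1\eta|$ is bounded below for $s\in[1,2]$, so $N$-fold integration by parts yields $|m_l(\xi,\eta)|\lesim_N 2^{-l}(2^{-l}|\xi+a_1\eta|)^{-N}$. Combined with the same Cauchy--Schwarz argument as in the proof of Lemma \ref{170724lemma3.1}, this contribution is controlled by $2^{\gamma_0 l}2^{-Nk}\|f\|_2\|g\|_2\|h\|_2$ for arbitrarily large $N$, which is more than enough.

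On the diagonal region, condition \eqref{180816e3.4} ensures $|\Phi''(s)|\simeq |\alpha_2(\alpha_2-1)a_2|\,2^{-l\alpha_2}|\eta|$ uniformly for $s\in[1,2]$, so van der Corput yields the pointwise bound $|m_l(\xi,\eta)|\lesim 2^{-l}(|a_2|\,2^{-l\alpha_2}|\eta|)^{-1/2}\simeq C_P\,2^{l(\alpha_2/2-1)}2^{-k/2}$. A stationary-phase expansion refines this to $m_l(\xi,\eta)=A(\xi,\eta)\,e^{i\Psi(\xi,\eta)}+(\text{lower order})$, where $\Psi(\xi,\eta)=\Phi(s^*(\xi,\eta))$ is smooth in the diagonal region and the amplitude satisfies $|A|\simeq 2^{l(\alpha_2/2-1)}2^{-k/2}$.

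The main step---and the expected principal obstacle---is to convert this pointwise multiplier bound into a trilinear bound that preserves the $2^{-k/2}$ gain: a bare $\|m_l\|_\infty$ estimate together with Cauchy--Schwarz on $\hat f\otimes\hat g\otimes \hat h(\xi+\eta)$ integrates over a region in $(\xi,\eta)$ of measure too large, wiping out the $k$-decay. I would instead view the remaining contribution as a bilinear oscillatory integral with phase $\Psi$ (the $h$-variable contributes only $e^{-ix(\xi+\eta)}$ after Plancherel) and invoke the H\"ormander-type $L^2\times L^2\to L^2$ oscillatory integral estimates in the framework of Li \cite{Li13}. The required non-degeneracy of the mixed Hessian $\partial_\xi\partial_\eta\Psi$ on the diagonal region can be checked by implicit differentiation of $\Phi'(s^*)=0$; its non-vanishing is precisely a consequence of the second dominating monomial $a_2 t^{\alpha_2}$, i.e., of \eqref{180816e3.4}, which is why the two-term dominance condition was put in place from the start. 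This furnishes the remaining $2^{-\gamma k}$ factor, with some $\gamma>0$ and some $\gamma_0$ depending only on $P$.
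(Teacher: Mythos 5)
Your overall architecture (Fourier side, off-diagonal/stationary dichotomy, stationary-phase expansion of the multiplier, then a Li-type oscillatory trilinear estimate for the main term) matches the paper's, and the off-diagonal and error-term parts are fine. But the decisive final step has a genuine gap: for the trilinear form $\iint \hat f(\xi)\hat g(\eta)\hat h(\xi+\eta)\,A(\xi,\eta)e^{i2^k\Psi(\xi,\eta)}\,d\xi\,d\eta$, non-degeneracy of the mixed Hessian $\partial_\xi\partial_\eta\Psi$ is \emph{not} the right condition and does not yield any decay in $k$. The form is invariant under replacing $\Psi$ by $\Psi+u(\xi)+v(\eta)+w(\xi+\eta)$, since each such summand can be absorbed as a modulation into $f$, $g$, or $h$ without changing $L^2$ norms; in particular $\Psi(\xi,\eta)=\xi\eta=\tfrac12\big((\xi+\eta)^2-\xi^2-\eta^2\big)$ has $\partial_\xi\partial_\eta\Psi\equiv 1$ yet produces no decay whatsoever. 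The correct non-degeneracy, which is what Li's framework actually requires and what the paper verifies in Claim \ref{181106claim3.5}, is on the third-order quantity $\partial_\xi\partial_\eta(\partial_\xi-\partial_\eta)\Psi$, precisely because this operator annihilates all phases of the degenerate form $u(\xi)+v(\eta)+w(\xi+\eta)$. One then cannot apply H\"ormander directly: the paper first performs a Cauchy--Schwarz in the $h$-direction (the $\tau$-integral in \eqref{181106e3.46}), which converts the third-order lower bound on $\Psi$ into a second-order lower bound $\gtrsim 2^{-\gamma k}|\tau|$ on the mixed Hessian of the difference phase $\Psi(\xi,\eta)-\Psi(\xi-\tau,\eta+\tau)$, and only at that stage invokes the H\"ormander-type bilinear estimate (Lemma \ref{181106elmma3.7}).

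Two further points you would need to supply even after fixing the condition. First, the third-order quantity genuinely can vanish: the computation in Claim \ref{181106claim3.5} shows it is a nontrivial function of $\rho=\xi/\eta$ whose leading behaviour is governed by the top-degree monomial $a_n t^{\alpha_n}$ (not simply by $a_2t^{\alpha_2}$ as you suggest), and it vanishes on finitely many small intervals $J_\iota$ in $\rho$; these must be excised and their contribution controlled separately by Cauchy--Schwarz on a set of small measure. Second, to make the stationary-phase expansion and the critical-point localization uniform, the paper further decomposes $\hat f$ and $\hat g$ into $O(2^{\gamma_0 l})$ blocks of relative width $2^{-\gamma_0 l}$; this is harmless (constants may depend on $l$) but is needed to pin down $t_c$ and to justify treating \eqref{180322e3.31} as holding on entire blocks.
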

Assuming this lemma, we will be able to finish the proof of the desired bilinear estimate. Recall that we need to control  \eqref{170724e3.12}.
By Lemma \ref{170724lemma3.2}, this can be bounded by 
\beq
2^{\gamma_0 l}\sum_{k_1, k_3 \text{ with }k_3\le 2k_1+2|b_1|} 2^{-\gamma k_1}\|P_{k_1}f\|_2 \|P_{k_1-b_1}g\|_2 \|P_{k_3}h\|_2,
\endeq
for some $\gamma>0$, which can be further bounded by 
\beq
\|f\|_{H^{-\gamma/6}}\|g\|_{H^{-\gamma/6}} \|h\|_{H^{-\gamma/6}}.
\endeq
This finishes the proof of the desired estimate. \\

Hence it remains to prove Lemma \ref{170724lemma3.2}. As the constant is allowed to depend on $l$, we can always assume that $k$ is at least some large constant times $\ell$. Turning to the Fourier side, we obtain 
 \beq\label{170724e3.17}
\iint \widehat{P_{k}f}(\xi)\widehat{P_{k-b_1}g}(\eta) \hat{h}(\xi+\eta)\left[\int_{\R} e^{i2^{-l}t\xi+iP(2^{-l}t)\eta}\tau_0(t)dt \right]d\xi d\eta.
\endeq
Write 
\beq
P(t)=a_1 t+Q(t)=a_1 t+a_2 t^{\alpha_2}+ R(t).
\endeq
The derivative of the phase function in \eqref{170724e3.17} is given by 
\beq
2^{-l} \xi+ a_1 2^{-l} \eta+ a_2 \alpha_2 2^{-\alpha_2 l} t^{\alpha_2-1} \eta + 2^{-l} R'(2^{-l}t)\eta.
\endeq
From the first order derivative of the phase function, we are still not able to locate the critical point. To do so, we apply a more refined frequency decomposition to $f$ and $g$. For a fixed integer $\Delta$, let $\psi_{k, l, \Delta}: \R\to \R$ be a non-negative smooth function supported on $[2^k+\Delta\cdot 2^{k-\gamma_0 l}, 2^k+ (\Delta+2) 2^{k-\gamma_0 l}]$ such that 
\beq\label{180320e3.29}
\psi_k(\xi)=\sum_{\Delta\in \Z} \psi_k(\xi)\psi_{k, l, \Delta}(\xi), \text{ for every } \xi\in \R.
\endeq 
That is, $\{\psi_{k, l, \Delta}\}_{\Delta\in \Z}$ forms a partition of unity on the support of $\psi_k$. Moreover, the sum in \eqref{180320e3.29} is indeed a finite sum, and the number of non-zero terms is about $2^{\gamma_0 l}$. Here $\gamma_0$ is some large number that is to be chosen. For convenience, we will allow $\gamma_0$ to change from line to line, unless otherwise stated. \\

We write \eqref{170724e3.17} as 
\beq
\sum_{\Delta_1, \Delta_2\in \Z} \iint \big[\widehat{P_{k}f}(\xi)\psi_{k, l, \Delta_1}(\xi)\big] \big[ \widehat{P_{k-b_1}g}(\eta)\psi_{k-b_1, l, \Delta_2}(\eta)\big] \hat{h}(\xi+\eta)\left[\int_{\R} e^{i2^{-l}t\xi+iP(2^{-l}t)\eta}\tau_0(t)dt \right]d\xi d\eta
\endeq
Notice that in the above sum, we have about $2^{2\gamma_0 l}$ terms that may be non-zero. As the implicit constant is allowed to depend on $l$, it suffices to bound each term separately. Moreover, by the stationary phase principle, we only need to care about those terms whose phase functions admit critical points. In other words, we only need to care about those $\Delta_1$ and $\Delta_2$ such that 
\beq\label{180322e3.31}
|\frac{2^{-l}\xi+a_1 2^{-l}\eta}{a_2 \alpha_2 2^{-\alpha_2 l}\eta}|\simeq 1,
\endeq
for some $\xi\in \text{supp}(\psi_{k, l, \Delta_1})$ and $\eta\in \text{supp}(\psi_{k-b_1, l, \Delta_2})$. Fix such $\Delta_1$ and $\Delta_2$, by the mean value theorem, we actually know that \eqref{180322e3.31} holds true for every $\xi\in \text{supp}(\psi_{k, l, \Delta_1})$ and $\eta\in \text{supp}(\psi_{k-b_1, l, \Delta_2})$, if we choose $\gamma_0$ large enough, depending on $P(t)$. \\

After this reduction, what we need to prove becomes
\beq\label{180322e3.32}
\Big| \iint \hat{f}(\xi) \hat{g}(\eta) \hat{h}(\xi+\eta)\left[\int_{\R} e^{i2^{-l}t\xi+iP(2^{-l}t)\eta}\tau_0(t)dt \right]d\xi d\eta\Big| \le 2^{\gamma_0 l}2^{-\gamma k} \|f\|_2 \|g\|_2 \|h\|_2,
\endeq
under the assumption that 
\beq
\hat{f}=\widehat{P_k f}\cdot  \psi_{k, l, \Delta_1}, \,\,\, \hat{g}=\widehat{P_{k-b_1}g}\cdot \psi_{k-b_1, l, \Delta_2}
\endeq
and that \eqref{180322e3.31} holds for every $\xi\in \text{supp}(\hat{f})$ and $\eta\in \text{supp}(\hat{g})$.
Let $t_c\in [1, 2]$ be the critical point of the phase function; that is, 
\beq
\xi+P'(2^{-l}t_c)\eta=0.
\endeq
We will prove the following approximation formula. 
\begin{lem}[Method of stationary phase]\label{180719lem3.3}
Under the above notation, we have 
\beq\label{170724e3.18}
\int_{\R} e^{i2^{-l}t\xi+iP(2^{-l}t)\eta}\tau_0(t)dt=  a(\xi, \eta) \eta^{-1/2} e^{i \Psi(\xi, \eta)}+O_l(\frac{1}{|\eta|}),
\endeq
with 
\beq
a(\xi, \eta):=(2^{-2l} P''(2^{-l}t_c) )^{-1/2}\tau_0(t_c) 
\endeq
and
\beq
 \Psi(\xi, \eta):=2^{-l}t_c \xi+P(2^{-l} t_c)\eta.
 \endeq
 Moreover, 
 \beq
 O_l(\frac{1}{|\eta|}) \le 2^{\gamma_0 l} \frac{1}{|\eta|}.
 \endeq
 \end{lem}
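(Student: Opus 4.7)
My plan is to apply the classical method of stationary phase to the phase function $\Phi(t):=2^{-l}t\xi+P(2^{-l}t)\eta$. First I would verify that on $\supp(\tau_0)\subset[1,2]$ there is a unique non-degenerate critical point $t_c$. Since we have restricted to those $(\xi,\eta)$ for which \eqref{180322e3.31} holds, the critical equation $\xi+P'(2^{-l}t_c)\eta=0$ has a solution with $t_c\simeq 1$; and condition \eqref{180816e3.4} guarantees that $a_2t^{\alpha_2}$ dominates the remainder $R(t)$ at scale $2^{-l}$, so the second derivative
\[
\Phi''(t_c)=2^{-2l}P''(2^{-l}t_c)\,\eta
\]
satisfies $|\Phi''(t_c)|\simeq 2^{-\alpha_2 l}|\eta|$, which is non-zero for $l\ge l_0$.

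Next I would perform a Morse-lemma change of variables $t=t_c+\phi(u)$ on a neighborhood of $t_c$, where $\phi$ is a smooth diffeomorphism with $\phi(0)=0$ and $\phi'(0)=1$, chosen so that $\Phi(t)-\Phi(t_c)=\tfrac12\Phi''(t_c)u^2$. With a cut-off $\chi$ supported near $0$, the integral splits as
\[
\int_\R e^{i\Phi(t)}\tau_0(t)\,dt = e^{i\Phi(t_c)}\int_\R e^{i\Phi''(t_c)u^2/2}\,b(u)\,du + \mc{R},
\]
where $b(u):=\tau_0(t_c+\phi(u))\phi'(u)\chi(u)$ and the remainder $\mc{R}$ is supported away from $t_c$. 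On that complementary region $|\Phi'(t)|\gtrsim 2^{-l}|\eta|$, so repeated integration by parts gives $|\mc{R}|\lesssim_{l,N} |\eta|^{-N}$ for any $N$, absorbed into the stated error.

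For the main term I would Taylor expand $b(u)=b(0)+u\,b_1(u)$ with $b(0)=\tau_0(t_c)$. The constant piece yields a Fresnel integral whose classical asymptotic
\[
\int_\R e^{i\lambda u^2/2}\chi(u)\,du = \sqrt{2\pi/|\lambda|}\,e^{i\pi\,\mathrm{sgn}(\lambda)/4} + O\bigl(|\lambda|^{-1}\bigr)
\]
(itself obtained by Fresnel's formula together with integration by parts on $\chi-1$) combines with $b(0)=\tau_0(t_c)$ to produce exactly $a(\xi,\eta)\eta^{-1/2}e^{i\Psi(\xi,\eta)}$, the factor $\eta^{-1/2}$ coming from $|\Phi''(t_c)|^{-1/2}=(2^{-2l}P''(2^{-l}t_c))^{-1/2}|\eta|^{-1/2}$. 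The linear piece $ub_1(u)$ is handled by writing $u\,e^{i\lambda u^2/2}=(i\lambda)^{-1}\tfrac{d}{du}e^{i\lambda u^2/2}$ and integrating by parts once, contributing an error of size $O(|\lambda|^{-1})$.

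The main obstacle will be tracking polynomial dependence on $2^l$. Because $|\Phi''(t_c)|\simeq 2^{-\alpha_2 l}|\eta|$ degenerates as $l$ grows, each appearance of $|\Phi''(t_c)|^{-1}$ costs a factor of $2^{\alpha_2 l}$; one must also verify that the Morse diffeomorphism $\phi$ and its derivatives, viewed through the rescaled phase $2^l\Phi(t)/\eta$, depend on $t$ with bounds that are at worst polynomial in $2^l$ uniformly in $\xi,\eta$. Assembling these losses yields the error estimate $O_l(1/|\eta|)\le 2^{\gamma_0 l}/|\eta|$ with $\gamma_0$ depending only on $P$, as claimed.
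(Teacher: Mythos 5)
Your proposal is correct and follows essentially the same route as the paper: localize near the critical point with a small cutoff, bound the far piece by integration by parts, perform a Morse-lemma change of variables, and extract the leading term from a Fresnel-type integral, with losses tracked polynomially in $2^{l}$. The only cosmetic difference is that the paper (following Stein's Proposition~3) inserts the Gaussian factor $e^{-y^2}$ and uses the asymptotic for $\int e^{i\lambda t^2}e^{-t^2}\,dt$, while you Taylor-expand the amplitude at $u=0$ and apply the pure Fresnel integral plus one further integration by parts --- interchangeable variants of the same argument.
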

Lemma \ref{180719lem3.3} will be proved in Section \ref{181208section3}. Substituting \eqref{170724e3.18} into \eqref{180322e3.32} gives rise to two terms. Let us first estimate the contribution from the term containing $O_l(\frac{1}{|\eta|})$. We bound it by 
 \beq\label{170724e3.19}
 \begin{split}
& \iint \left|\hat{f}(\xi)\hat{g}(\eta) \hat{h}(\xi+\eta)\frac{1}{|\eta|}\right|d\xi d\eta \lesim_l 2^{-k} \iint \left|\hat{f}(\xi)\hat{g}(\eta) \hat{h}(\xi+\eta)\right|d\xi d\eta.\end{split}
\endeq
By H\"older's inequality, the last term can be bounded by 
\beq
2^{-k/2} \|f\|_2 \|g\|_2\|h\|_2.
\endeq
So far we have managed to control the contribution from the second term on the right hand side of \eqref{170724e3.18}. \\

Now we turn to the first term on the right hand side of \eqref{170724e3.18}. The corresponding term we need to handle is 
\beq
\begin{split}
& \iint \hat{f}(\xi)\hat{g}(\eta) \hat{h}(\xi+\eta) \frac{a(\xi, \eta)}{\sqrt{\eta}} e^{i \Psi(\xi, \eta)} d\xi d\eta.
\end{split}
\endeq
We apply a change of variables $\xi\to 2^k\xi, \eta\to 2^{k}\eta$. We also rename $f, g, h$ for simplicity. It suffices to prove 
\beq\label{181106e3.40}
\Big|\iint f(\xi)g(\eta) h(\xi+\eta) a(\xi, \eta) e^{i 2^k \Psi(\xi, \eta)} d\xi d\eta\Big| \lesim 2^{-\gamma k} \|f\|_2\|g\|_2\|h\|_2,
\endeq
for every function $a: \R^2\to \R$ with $\|a\|_{C^4}\lesim 1$,  and for functions $f$ supported on $\pm [1+\Delta_1 2^{-\gamma_0 l}, 1+(\Delta_1+2) 2^{-\gamma_0 l}]$ and $g$ supported on $\pm [2^{-b_1}+\Delta_2 2^{-b_1-\gamma_0 l}, 2^{-b_1}+(\Delta_2+2) 2^{-b_1-\gamma_0 l}]$. Here $\Delta_1$ and $\Delta_2$ are two positive integers that are smaller than $2^{\gamma_0 l}$. Moreover, they are chosen such that \eqref{180322e3.31} holds for every $\xi\in \supp(f)$ and $\eta\in \supp(g)$.
\begin{claim}\label{181106claim3.5}
There exist integers $C_P$, $C_P'$ depending only on $P$ and intervals $J_1, \dots, J_{C_P}\subset \R$ of length $2^{-\gamma k/C_P'}$, such that whenever $\xi/\eta\not\in J_{\iota}$ for any $\iota$, we have 
\beq
|\partial_{\xi}\partial_{\eta}(\partial_{\xi}-\partial_{\eta}) \Psi|\gtrsim 2^{-\gamma k}.
\endeq
The implicit constant is allowed to depend on $P$, and can be taken to be polynomial in $2^{l}$.
\end{claim}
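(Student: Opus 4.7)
The plan is to exploit the homogeneity of $\Psi$ in $(\xi,\eta)$ to reduce the mixed derivative to a function of the single ratio $u = \xi/\eta$, and then recognize the resulting object as a nontrivial polynomial in the critical-point variable $s := 2^{-l}t_c$. Since the defining equation $\xi + P'(s)\eta = 0$ for the critical point depends only on $u$, the phase satisfies $\Psi(\xi,\eta) = \eta\tilde\Psi(u)$, and by the envelope theorem $\tilde\Psi'(u) = s(u)$, where $s(u)$ is determined implicitly by $P'(s) = -u$. Differentiating through the implicit relation yields $\tilde\Psi''(u) = -1/P''(s)$ and $\tilde\Psi'''(u) = -P'''(s)/P''(s)^3$, and a routine chain-rule calculation produces
\[
\partial_\xi\partial_\eta(\partial_\xi-\partial_\eta)\Psi = -\frac{(1+2u)\tilde\Psi''(u) + u(1+u)\tilde\Psi'''(u)}{\eta^2} = \frac{G(s)}{\eta^2\,P''(s)^3},
\]
where, after substituting $u = -P'(s)$,
\[
G(s) = (1-2P'(s))\,P''(s)^2 - P'(s)(1-P'(s))\,P'''(s).
\]

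The crux is then that $G$ is a nonzero polynomial in $s$ of degree $D_P := 3\alpha_n - 5$. A leading-coefficient computation using $P(s) = a_n s^{\alpha_n} + (\text{lower order})$ shows that the two summands each contribute a term of order $s^{3\alpha_n-5}$, with leading coefficients $-2a_n^3\alpha_n^3(\alpha_n-1)^2$ and $+a_n^3\alpha_n^3(\alpha_n-1)(\alpha_n-2)$; they sum to $-a_n^3\alpha_n^4(\alpha_n-1)$, which is nonzero since $\alpha_n\geq 2$ and $a_n\neq 0$. Hence $G$ is a polynomial of exact degree $D_P$ with coefficients bounded in terms of $P$.

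With nonvanishing in hand, I would invoke a standard Remez-type sublevel-set estimate for polynomials: the set $\{s\in[2^{-l},2^{1-l}] : |G(s)|<\delta\}$ is a union of at most $D_P$ intervals of total length $\lesssim_P 2^{Cl}\,\delta^{1/D_P}$ for some $C = C(P)$, where the $2^{Cl}$ factor accounts for the fact that the $L^\infty$ norm of $G$ on the shrinking interval $[2^{-l},2^{1-l}]$ may be as small as $|c_m|\,2^{-lm}$, with $m$ the order of vanishing of $G$ at $s=0$. Pushing these intervals forward through the diffeomorphism $u = -P'(s)$, whose Jacobian $|P''(s)|$ is polynomially bounded in $2^l$, produces intervals $J_1,\dots,J_{D_P}\subset\R$ in $u$-space of length $\leq 2^{-\gamma k/C_P'}$ after an appropriate choice $\delta = 2^{-\gamma'k}\cdot 2^{O(l)}$. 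Outside these intervals $|G(s)|\geq\delta$; combined with $|\eta|\simeq 2^k$ and the polynomial-in-$2^l$ lower bound on $|P''(s)|^3$, this yields the claimed estimate $|\partial_\xi\partial_\eta(\partial_\xi-\partial_\eta)\Psi|\gtrsim 2^{-\gamma k}$ with all $l$-dependence absorbed into a $2^{O(l)}$ implicit constant.

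The main obstacle is quantitative control of $G$ on the shrinking interval $[2^{-l},2^{1-l}]$ rather than on a fixed interval: although the leading-term analysis shows $G\not\equiv 0$ globally, the effective lower bound near $s=0$ is governed by the lowest-order nonvanishing Taylor coefficient of $G$ there, whose order requires case analysis depending on whether $a_1\in\{0, 1/2, 1\}$ and on the gap between $\alpha_1=1$ and $\alpha_2$ (since $P''(0)$ and $P'''(0)$ may themselves vanish). Keeping all resulting losses polynomial in $2^l$, so as to be absorbed into the implicit constant, is the delicate bookkeeping step.
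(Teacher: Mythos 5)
Your proposal is correct and follows essentially the same route as the paper: the ``direct calculation'' in the paper produces exactly your expression $(1+2\rho)Q''(s)^2+\rho(1+\rho)Q'''(s)$ (with $s=(Q')^{-1}(-\rho-a_1)$, i.e.\ your $G(s)/(\eta^2 P''(s)^3)$ after the substitution $P'=a_1+Q'$), and both arguments conclude by extracting the same nonvanishing leading coefficient $-a_n^3\alpha_n^4(\alpha_n-1)$ of the degree-$(3\alpha_n-5)$ polynomial. The only difference is that you spell out the final sublevel-set/Remez step and the bookkeeping of the $2^{O(l)}$ losses on $[2^{-l},2^{1-l}]$, which the paper compresses into ``follows immediately by choosing $\gamma$ small enough''; your added detail is sound and consistent with the claim's allowance of implicit constants polynomial in $2^l$.
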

The proof of the claim is postponed to the end of this section. Let $\widetilde{a}: \R\to \R$ be a smooth bump function taking value one on each $2J_{\iota}$ such that $\|\widetilde{a}\|_{C^4}\lesim 2^{4\gamma k}$. To prove \eqref{181106e3.40}, we will decompose $a(\xi, \eta)=a(\xi, \eta) \widetilde{a}(\xi/\eta) +a(\xi, \eta)(1-\widetilde{a}(\xi/\eta)) $ and control the two resulting terms separately. 
For the former term, the oscillation from $e^{i2^k \Psi}$ no longer plays any role, and we simply put the absolute value sign inside the integral and obtain 
\beq
\iint \big| f(\xi)g(\eta) h(\xi+\eta) a(\xi, \eta)\widetilde{a}(\xi/\eta) \big| d\xi d\eta.
\endeq
By Cauchy-Schwarz, this can be easily bounded by $2^{-\gamma k/C_P'}\|f\|_2 \|g\|_2\|h\|_2.$ To control the latter term, it suffices to prove 
\begin{lem}\label{181106elmma3.6}
For every small positive $\gamma>0$, every function $a: \R^2\to\R $ supported on $[-1, 1]^2$ with $\|a\|_{C^4}\le 2^{\gamma k}$, every $\Psi: \R^2\to \R$ with 
\beq
|\partial_{\xi}\partial_{\eta}(\partial_{\xi}-\partial_{\eta}) \Psi|\gtrsim 2^{-\gamma k} \text{ and } \|\Psi\|_{C^4}\lesim 1,
\endeq
we have 
\beq\label{181106e3.44}
\Big|\iint f(\xi)g(\eta) h(\xi+\eta) a(\xi, \eta) e^{i 2^k \Psi(\xi, \eta)} d\xi d\eta\Big| \lesim 2^{-\gamma k} \|f\|_2\|g\|_2\|h\|_2.
\endeq
Here taking $\gamma=10^{-5}$ is more than enough. 
\end{lem}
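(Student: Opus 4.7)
The proof plan is a standard Cauchy--Schwarz plus $TT^*$ plus H\"ormander bilinear oscillatory integral argument. First I would use the change of variables $u = \xi + \eta$ to write the trilinear form as $\int h(u) F(u)\, du$, where $F(u) := \int f(\xi) g(u-\xi) a(\xi, u-\xi) e^{i 2^k \Psi(\xi, u-\xi)}\, d\xi$. By Cauchy--Schwarz this is bounded by $\|h\|_2 \|F\|_2$, so the problem reduces to showing $\|F\|_2 \lesim 2^{-\gamma k} \|f\|_2 \|g\|_2$. Expanding $\|F\|_2^2$ and changing variables $s = \xi_2 - \xi_1$ and $w = u - \xi_1$ gives $\|F\|_2^2 = \int I(s)\, ds$, where
\[
I(s) = \iint f(\xi_1)\overline{f(\xi_1+s)}\, g(w)\overline{g(w-s)}\, A_s(\xi_1, w)\, e^{i 2^k \Phi_s(\xi_1, w)}\, d\xi_1\, dw
\]
with $\Phi_s(\xi_1, w) := \Psi(\xi_1, w) - \Psi(\xi_1 + s, w - s)$ and $A_s(\xi_1,w) := a(\xi_1, w)\overline{a(\xi_1+s, w-s)}$.

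The mean value theorem applied to the hypothesis $|\partial_\xi\partial_\eta(\partial_\xi - \partial_\eta)\Psi| \gtrsim 2^{-\gamma k}$ yields the nondegeneracy
\[
|\partial_{\xi_1}\partial_w \Phi_s(\xi_1, w)| = |\Psi_{\xi\eta}(\xi_1, w) - \Psi_{\xi\eta}(\xi_1+s, w-s)| \gtrsim |s| \cdot 2^{-\gamma k}.
\]
H\"ormander's bilinear oscillatory integral estimate then gives $|I(s)| \lesim 2^{O(\gamma k)} (2^k|s|)^{-1/2} \|F_s\|_2 \|G_s\|_2$, where $F_s(\xi) := f(\xi)\overline{f(\xi+s)}$ and $G_s(w) := g(w)\overline{g(w-s)}$; alternatively, the trivial bound gives $|I(s)| \lesim 2^{O(\gamma k)} \|f\|_2^2 \|g\|_2^2$.

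To finish, I would integrate in $s$, splitting the range at a threshold $\delta := 2^{-(1-\gamma')k}$ for a suitable parameter $\gamma' > 0$. The small-$|s|$ part, using the trivial bound, contributes $\lesim \delta \|f\|_2^2 \|g\|_2^2$. For $|s| \geq \delta$, an asymmetric Cauchy--Schwarz in $s$ distributing the weight $(2^k|s|)^{-1/2}$ onto $\|F_s\|_2$, together with the crude cutoff $|s|^{-1} \leq \delta^{-1}$ and the identity $\int \|F_s\|_2^2\, ds = \|f\|_2^4$, gives
\[
\int_{|s|\geq \delta} |I(s)|\, ds \lesim 2^{O(\gamma k)} 2^{-k/2} \Bigl(\int_{|s|\geq\delta} |s|^{-1} \|F_s\|_2^2\, ds\Bigr)^{1/2} \|g\|_2^2 \lesim 2^{-\gamma' k/2 + O(\gamma k)} \|f\|_2^2 \|g\|_2^2.
\]
Choosing $\gamma'$ appropriately balances both contributions and yields $\|F\|_2^2 \lesim 2^{-2\gamma k} \|f\|_2^2 \|g\|_2^2$, and hence the desired bilinear estimate.

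The subtlest step is this $s$-integration. A naive symmetric Cauchy--Schwarz in $s$ would produce the factor $\int \|F_s\|_2^2 \|G_s\|_2^2\, ds = \||f|^2 * |g|^2\|_2^2$, which is not controllable by $\|f\|_2, \|g\|_2$ alone for general $L^2$ functions; distributing the oscillatory weight asymmetrically is essential. Moreover, the amplitude losses $2^{O(\gamma k)}$ arising from $\|a\|_{C^4} \leq 2^{\gamma k}$ and the $2^{\gamma k/2}$ factor from the weak nondegeneracy must be carefully tracked and overcome by the power saving, which is feasible provided $\gamma$ is small enough (and $\gamma = 10^{-5}$ is comfortably within this regime).
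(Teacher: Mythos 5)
Your argument is correct and is essentially the paper's own proof: Cauchy--Schwarz in $h$, a $TT^{*}$ expansion producing the phases $\Psi(\xi,\eta)-\Psi(\xi+s,\eta-s)$ whose mixed Hessian is $\gtrsim |s|\,2^{-\gamma k}$ by the mean value theorem, a splitting of the $s$-integral at a small threshold, and H\"ormander's bilinear estimate (the paper's Lemma \ref{181106elmma3.7}) on the large-$|s|$ range. The only divergence is technical: the paper first localizes $g$ to intervals of length $2^{-2\gamma k}$ and closes the $s$-integral by extracting a saving $2^{-6\gamma k}$ that is uniform over the large-$|s|$ range and then applying the unweighted Cauchy--Schwarz bound $\int\|F_s\|_2\|G_s\|_2\,ds\le \|f\|_2^2\|g\|_2^2$, so the symmetric Cauchy--Schwarz you were worried about never produces the uncontrollable quantity $\int\|F_s\|_2^2\|G_s\|_2^2\,ds$; your asymmetric weighting of the factor $(2^k|s|)^{-1/2}$ is an equally valid, and in fact quantitatively sharper, way to finish.
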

\begin{proof}[Proof of Lemma \ref{181106elmma3.6}.]
This lemma is essentially due to Li \cite{Li13}. Here we need to keep track of the dependence on norms of $a$, on its support, and so on. Oscillatory integrals of the form \eqref{181106e3.44} have also been extensively studied in Xiao \cite{Xiao17} and Gressman and Xiao \cite{GX16}. 

We start the proof. By applying the triangle inequality, it suffices to prove \eqref{181106e3.44} with a better gain $2^{-3\gamma k}$ in place of $2^{-\gamma k}$, for every function $g$ supported on an interval of length $2^{-2\gamma k}$. By a change of variable and by applying Cauchy-Schwarz, it is enough to prove 
\beq
\Big\|\int f(\xi-\eta)g(\eta) a(\xi-\eta, \eta) e^{i 2^k \Psi(\xi-\eta, \eta)}d\eta\Big\|^2_{L^2_{\xi}} \lesim 2^{-6\gamma k} \|f\|^2_2\|g\|^2_2.
\endeq
We expand the square on the left hand side. After a change of variable, we obtain 
\beq\label{181106e3.46}
\int_{|\tau|\le 2^{-\gamma k}} \iint_{\R^2} e^{i2^k[\Psi(\xi, \eta)-\Psi(\xi-\tau, \eta+\tau)]} F_{\tau}(\xi) G_{\tau}(\eta) a'_{\tau}(\xi, \eta)d\xi d\eta d\tau,
\endeq
for some new compactly supported amplitude $a'_{\tau}$. Moreover, $F_{\tau}(\cdot):=f(\cdot)\bar{f}(\cdot-\tau)$ and $G_{\tau}(\cdot):=g(\cdot)\bar{g}(\cdot+\tau)$. By the mean value theorem, it is easy to see that 
\beq
\Big|\partial_{\xi}\partial_{\eta} \Big(\Psi(\xi, \eta)-\Psi(\xi-\tau, \eta+\tau) \Big)\Big|\gtrsim 2^{-\gamma k}|\tau|.
\endeq
To proceed, we need 
\begin{lem}\label{181106elmma3.7}
For every small positive $\gamma>0$, every function $a': \R^2\to \R$ supported on $[-1, 1]^2$ with $\|a'\|_{C^4}\le 2^{\gamma k}$, every $\Xi: \R^2\to \R$ with 
\beq
\big|\partial_{\xi}\partial_{\eta}\Xi \big|\gtrsim 2^{-7\gamma k} \text{ and } \|\Xi\|_{C^4}\lesim 1,
\endeq
we have 
\beq
\Big|\iint F(\xi)G(\eta) a'(\xi, \eta) e^{i2^k \Xi(\xi, \eta)}d\xi d\eta \Big|\lesim 2^{-6\gamma k} \|F\|_2 \|G\|_2.
\endeq
Again taking $\gamma=10^{-5}$ is more than enough. 
\end{lem}
To control \eqref{181106e3.46}, we split the integral in $\tau$ into two parts: 
\beq
\int_{|\tau|\le 2^{-6\gamma k}}+\int_{|\tau|\ge 2^{-6\gamma k}}.
\endeq
Regarding the former term, we apply the triangle inequality and Cauchy-Schwarz to bound it by $2^{-6\gamma k}\|f\|_2^2\|g\|_2^2$. Regarding the latter term, we apply Lemma \ref{181106elmma3.7} and bound it by 
\beq
2^{-6\gamma k}\int_{|\tau|\ge 2^{-6\gamma k}} \|F_{\tau}\|_2\|G_{\tau}\|_2d\tau
\endeq
By applying Cauchy-Schwarz, this is bounded by $2^{-6\gamma k}\|f\|_2^2\|g\|_2^2.$ This finishes the proof of Lemma \ref{181106elmma3.6}.
\end{proof}

\begin{proof}[Proof of Lemma \ref{181106elmma3.7}]
This lemma is essentially due to H\"ormander \cite{Hor73}. By Cauchy-Schwarz, it suffices to prove 
\beq\label{181106e3.52}
\Big\|\int F(\xi) a'(\xi, \eta)e^{i2^k \Xi(\xi, \eta)}d\xi\Big\|^2_2 \lesim 2^{-12\gamma k}\|F\|_2^2.
\endeq
By the triangle inequality, it suffices to prove \eqref{181106e3.52} with a better gain $2^{-30\gamma k}$ in place of $2^{-12 \gamma k}$, for every function $F$ supported on an interval of length $2^{-8\gamma k}$. We expand the square on the left hand side and obtain 
\beq
\iint \sigma_k(\xi_1, \xi_2) F(\xi_1)\bar{F}(\xi_2)d\xi_1 d\xi_2,
\endeq
where 
\beq
\sigma_k(\xi_1, \xi_2):=\int e^{i2^k (\Xi(\xi_1, \eta)-\Xi(\xi_2, \eta))}a'(\xi_1, \eta) \bar{a'}(\xi_2, \eta)d\eta.
\endeq
By the mean value theorem, we observe that 
\beq
\big|\partial_{\eta}(\Xi(\xi_1, \eta)-\Xi(\xi_2, \eta))\big|\gtrsim 2^{-7\gamma k}|\xi_1-\xi_2|.
\endeq
By applying integration by parts twice, we obtain 
\beq
|\sigma_k(\xi_1, \xi_2)|\lesim \min\{2^{2\gamma k}, 2^{-2k+50\gamma k}|\xi_1-\xi_2|^{-6}\}.
\endeq
By Schur's test, this gives us the desired bound if we choose $\gamma$ small enough. This finishes the proof of Lemma \ref{181106elmma3.7}.
\end{proof}

\begin{proof}[Proof of Claim \ref{181106claim3.5}.]
Recall that $t_c(\xi, \eta)$ is defined via
\beq
\xi+a_1 \eta+\eta Q'(2^{-l}t_c)=0.
\endeq
Moreover, 
\beq
\begin{split}
\Psi(\xi, \eta)& =2^{-l} (\xi+a_1 \eta) t_c+ \eta Q(2^{-l}t_c)\\
		& = (\xi+a_1 \eta) (Q')^{-1}\Big(-\frac{\xi+a_1 \eta}{\eta}\Big)+\eta Q\Big((Q')^{-1}\Big(-\frac{\xi+a_1 \eta}{\eta}\Big) \Big).
\end{split}
\endeq
Here $(Q')^{-1}$ means the inverse of the derivative of $Q$. By a direct calculation, we obtain 
\beq
\begin{split}
& \Big|\partial_{\xi}\partial_{\eta}(\partial_{\xi}-\partial_{\eta})\Psi(\xi, \eta) \Big|\\
& \approx \Big|(2\rho+1)\Big(Q''\Big((Q')^{-1}(-\rho-a_1)\Big)\Big)^2 +(\rho^2+\rho)Q'''\Big((Q')^{-1}(-\rho-a_1) \Big)\Big|,
\end{split}
\endeq 
where $\rho:=\xi/\eta$. By changing $\rho$ to $-\rho-a_1$, it is equivalent to consider 
\beq\label{181202e3.60}
\Big|-(2\rho+2 a_1-1)\Big(Q''\Big((Q')^{-1}(\rho)\Big)\Big)^2 +(\rho+a_1)(\rho+a_1-1)Q'''\Big((Q')^{-1}(\rho) \Big)\Big|
\endeq
Recall that $Q(t)=a_2 t^{\alpha_2}+R(t)$, where $a_2\neq 0$ and $R(t)$ can be viewed as a remainder term compared with $a_2 t^{\alpha_2}$ when $t\approx 2^{-l}$. Denote $s:=(Q')^{-1}(\rho)$. Then \eqref{181202e3.60} becomes 
\beq
\Big|-(2Q'(s)+2 a_1-1)\Big(Q''(s)\Big)^2 +(Q'(s)+a_1)(Q'(s)+a_1-1)Q'''(s)\Big|.
\endeq
The highest order term in the last display is given by
\beq
a_n^3 \alpha_n^3 s^{3\alpha_n-5} \Big(-2(\alpha_n-1)^2+(\alpha_n-1)(\alpha_n-2) \Big).
\endeq 
Notice that the coefficient does not vanish. From this, Claim \ref{181106claim3.5} follows immediately by choosing $\gamma$ small enough. 
\end{proof}

\section{\bf Stationary phase principle: Proof of Lemma \ref{180719lem3.3}}\label{181208section3}

Our goal in this section is to prove an asymptotic formula for 
\beq
\int_{\R} e^{i2^{-l}t\xi+iP(2^{-l}t)\eta}\tau_0(t)dt=2^l \int_{\R} e^{i t\xi+iP(t)\eta}\tau_0(2^l t)dt
\endeq
We follow the proof of Proposition 3 on Page 334 of Stein \cite{Ste93}.  Define 
\beq
\Phi_{\xi, \eta}(t):=t\xi+P(t)\eta.
\endeq
Recall some notation 
\beq
P(t)=a_1 t+Q(t)=a_1 t+a_2 t^{\alpha_2}+ R(t).
\endeq
Let $t_c\in (2^{-l-1}, 2^{-l+1})$ be the critical point of the phase function, that is, 
\beq
\xi+P'(t_c)\eta=\xi+(a_1+Q'(t_c))\eta=0.
\endeq
We expand the phase function about $t_c$:
\beq
\Phi_{\xi, \eta}(t)=\Phi_{\xi, \eta}(t_c)+ \frac{1}{2}Q''(t_c)\eta (t-t_c)^2 +O_l (|t-t_c|^3)\cdot \eta.
\endeq
Here 
\beq
O_l (|t-t_c|^3) \le C_P 2^{l} |Q''(t_c)||t-t_c|^3,
\endeq
where $C_P$ is a large constant depending only on $P$. 
Let $\vartheta$ be  a non-negative even smooth  function supported on $[-2, 2]$, constant on $[-1,1]$, and monotone on $[1,2]$. We normalize it such that  $\widehat{\vartheta}(0)=1$ and denote $\vartheta_\ell(x):=2^{\ell}\vartheta(2^{\ell}x)$. We write 
\beq
\begin{split}
\int_{\R} e^{i t\xi+iP(t)\eta}\tau_0(2^l t)dt&=\int_{\R} e^{i t\xi+iP(t)\eta}\tau_0(2^l t)\vartheta(2^{l+10 C_P} (t-t_c))dt\\
&+ \int_{\R} e^{i t\xi+iP(t)\eta}\tau_0(2^l t)\Big(1-\vartheta(2^{l+10 C_P} (t-t_c))\Big)dt=: I+II.
\end{split}
\endeq
The phase function in term $II$ does not admit any critical point. Hence by integration by parts, we obtain 
\beq
|II| \le 2^{\gamma_0 l} \frac{1}{|\eta|}. 
\endeq
For term $I$, we write it as 
\beq\label{180817e6.9}
e^{i\Phi_{\xi, \eta}(t_c)}\int_{\R}e^{i\Phi_{\xi, \eta}(t)-i\Phi_{\xi, \eta}(t_c)} \widetilde{\vartheta_l}(t)dt,
\endeq
where 
\beq
\widetilde{\vartheta_l}(t):=\tau_0(2^l t)\vartheta(2^{l+10 C_P} (t-t_c)).
\endeq
The support of the function $\widetilde{\vartheta_l}$ is chosen to be so small such that the change of variable 
\beq
(t-t_c)^2 +\frac{2}{Q''(t_c)}\cdot O_l (|t-t_c|^3)\to  y^2
\endeq
becomes valid. Under this change of variable, \eqref{180817e6.9} turns to 
\beq
e^{i\Phi_{\xi, \eta}(t_c)} \int_{\R} e^{i\frac{1}{2}Q''(t_c) \eta y^2} \vartheta'_l(y)dy,
\endeq
for some new smooth truncation function $\vartheta'_l$. We split the last expression into three terms: 
\beq
\begin{split}
& e^{i\Phi_{\xi, \eta}(t_c)} \int_{\R} e^{i\frac{1}{2}Q''(t_c) \eta y^2} e^{-y^2} (e^{y^2}\vartheta'_l(y)-\vartheta'_l(0)) \vartheta''_l(y)dy\\
&+e^{i\Phi_{\xi, \eta}(t_c)} \int_{\R} e^{i\frac{1}{2}Q''(t_c) \eta y^2} e^{-y^2} \vartheta'_l(0) (\vartheta''_l(y)-1)dy\\
&+e^{i\Phi_{\xi, \eta}(t_c)} \int_{\R} e^{i\frac{1}{2}Q''(t_c) \eta y^2} e^{-y^2} \vartheta'_l(0)dy,
\end{split}
\endeq
where $\vartheta''_l$ is a compactly supported  smooth function and is $1$ on the support of $\vartheta'_l$. These three terms will be called $I_1, I_2$ and $I_3$ and will be handled separately. \\

By the triangle inequality and an integration by parts argument, it is not difficult to see that 
\beq
|I_1|+ |I_2|\le 2^{\gamma_0 l} |\eta|^{-1}. 
\endeq
In the end, one just needs to observe that 
\beq
\int_{\R} e^{i\lambda t^2} e^{-t^2}dt=e_0 \lambda^{-1/2}+ O(\lambda^{-3/2})
\endeq
for some universal constant $e_0$. See equation (9) on Page 335 of Stein \cite{Ste93}. This finishes the proof of Lemma \ref{180719lem3.3}. \\

\section{\bf Positivity of the double integral: Proof of Theorem \ref{main-result}}\label{181208section4}

In this section, we prove Theorem \ref{main-result}. We follow the idea of \L aba and the third author \cite{LP09} and decompose 
\beq
\mu=\mu_1+\mu_2,
\endeq
with 
\beq
\mu_1(x)\le A \cdot 2^{6B} C_1,
\endeq
where $A$ is a large absolute constant. Here $\mu_1$ is obtained by convolving $\mu$ with a Fejer kernel. See page 442 of Laba and the third author \cite{LP09}. Here we make a remark that this is only place where one applies the assumption $(A)$ in \eqref{180817e1.1}. Also, in their decomposition, it is possible to choose $\mu_1$ so that 
\beq
\mu_1\ge 0 \text{ and } \int_0^1 \mu_1(x)dx=1.
\endeq
Moreover, we have $\widehat{\mu_2}(0)=0$, and 
\beq
\widehat{\mu_2}(n)=\min\left(1, \frac{|n|}{2N+1} \right) \hat{\mu}(n),
\endeq
where 
\beq\label{181208e4.5}
N=C_2^{-1} e^{\frac{1}{1-\alpha}}.
\endeq 
\begin{lem}\label{bourgain-lemma}
There exists $l_0\in \N$ and $c_0>0$ depending only on $C_1, C_2, B$, $\beta$ and the polynomial $P$ such that 
\beq
\iint \mu_1(x)\mu_1(x+t)\mu_1(x+P(t))\tau_{l_0}(t)dt dx\ge c_0.
\endeq
\end{lem}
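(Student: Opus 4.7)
The plan is to expand $I_{l_0}$ as a Fourier series on the torus $\mathbb{T}$ and to isolate a positive main term. Writing $\mu_1(x) = \sum_{n\in\Z} \hat{\mu_1}(n) e^{2\pi i n x}$, orthogonality in the $x$-integral gives
\[
I_{l_0} = \sum_{n_1+n_2+n_3=0} \hat{\mu_1}(n_1)\hat{\mu_1}(n_2)\hat{\mu_1}(n_3)\, K_{l_0}(n_2,n_3), \qquad K_{l_0}(n,m):=\int \tau_{l_0}(t)\, e^{2\pi i(nt+mP(t))}\,dt.
\]
The \emph{main term} is the all-zero summand: since $\hat{\mu_1}(0)=\int\mu_1=1$ and $K_{l_0}(0,0)=\int\tau_{l_0}=2^{-l_0}\|\tau_0\|_{L^1}$, it contributes exactly $2^{-l_0}\|\tau_0\|_{L^1}>0$.

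Next I would bound the error $\sum_{(n_2,n_3)\neq(0,0)}$ by splitting the sum according to the behaviour of the phase $nt+mP(t)$ on $\supp(\tau_{l_0})\subset [2^{-l_0-1},2^{-l_0+1}]$. Away from the stationary locus $\{n_2+a_1 n_3\approx 0\}$, integration by parts in $t$ yields rapid decay of $K_{l_0}$; near the stationary locus, Lemma \ref{180719lem3.3} applies and gives an amplitude of order $|n_3|^{-1/2}$ plus a lower-order remainder, the curvature coming from the second-dominating monomial $a_2 t^{\alpha_2}$ at scale $2^{-l_0}$. Summation over $(n_2,n_3)$ then uses (i) the pointwise Fourier decay $|\hat{\mu_1}(n)|\lesim C_2(1-\alpha)^{-B}|n|^{-\beta/2}$ inherited from (B), (ii) the frequency cutoff $\hat{\mu_1}(n)=0$ for $|n|>2N$ built into the Fej\'er decomposition, and (iii) the uniform $L^\infty$ bound $\|\mu_1\|_\infty\leq A\cdot 2^{6B}C_1$, which is independent of $\alpha$.

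The main obstacle will be the tension in the choice of $l_0$: the error estimate carries powers of $(1-\alpha)^{-B}$, so to dominate the main term $2^{-l_0}\|\tau_0\|_{L^1}$ one needs $l_0$ to be \emph{at most} of order $\log((1-\alpha)^{-1})$; on the other hand, $l_0$ must be at least the threshold $l_0^{(P)}$ of Proposition \ref{180816prop3.1} so that the monomial hierarchy \eqref{180816e3.3}--\eqref{180816e3.4} is in force and the oscillatory-integral analysis is valid. Both requirements can be reconciled by first choosing $\alpha_0$ close enough to $1$ (depending only on $C_1,C_2,B,\beta,P$) and then setting $l_0$ to be a suitable integer in the window $[l_0^{(P)},\,\frac{cB}{1-\beta}\log((1-\alpha_0)^{-1})]$. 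Bourgain's energy-pigeonhole argument (as advertised in the introduction) can be invoked over a range of scales in this window to select a specific $l_0$ at which the error is genuinely smaller than half the main term, yielding $c_0 := \tfrac{1}{2}\cdot 2^{-l_0}\|\tau_0\|_{L^1}$, depending only on $C_1,C_2,B,\beta,P$ as required.
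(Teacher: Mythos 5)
Your route---expanding everything in Fourier series on the torus and taking the all-zero frequency term $\hat{\mu_1}(0)^3K_{l_0}(0,0)=2^{-l_0}\|\tau_0\|_1$ as the main term---is genuinely different from the paper's, and it has a gap that I do not believe can be repaired within this framework. The problem is that the off-diagonal sum, estimated term by term with absolute values as you propose in (i)--(iii), is \emph{larger} than your main term, not smaller. Already the non-oscillatory block $1\le |n_2|,|n_3|\lesssim 2^{l_0}$, where the phase gives no gain and $|K_{l_0}(n_2,n_3)|\simeq 2^{-l_0}$, contributes about
\beq
2^{-l_0}\,(1-\alpha)^{-3B}\sum_{1\le |n_2|,|n_3|\lesssim 2^{l_0}} |n_2+n_3|^{-\beta/2}|n_2|^{-\beta/2}|n_3|^{-\beta/2}\;\approx\; 2^{-l_0}\,(1-\alpha)^{-3B}\,2^{(2-\frac{3\beta}{2})l_0},
\endeq
which for $\beta<1$ exceeds $2^{-l_0}$ by a factor of at least $2^{l_0/2}(1-\alpha)^{-3B}$. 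The decay $|n|^{-\beta/2}$ with $\beta<1$ is simply not summable enough to make the nonzero frequencies a perturbation of the zero frequency; the stationary-locus contribution up to the cutoff $N=C_2^{-1}e^{1/(1-\alpha)}$ behaves like $N^{3(1-\beta)/2}(1-\alpha)^{-3B}$ and so \emph{grows} as $\alpha\to 1$. Hence the ``tension'' you identify is not a matter of tuning $l_0$ inside a window: no choice of $l_0\ge l_0^{(P)}$ and no choice of $\alpha_0$ makes the absolutely-summed error smaller than $2^{-l_0}$. Invoking ``Bourgain's energy pigeonholing'' at this point does not help, because that argument does not select a scale at which an absolutely-estimated oscillatory error is small; it selects a scale at which an intermediate-frequency \emph{energy increment} is small, and that requires a different decomposition from the one you set up.

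The paper's proof avoids this by choosing a much larger main term. Writing $f=\mu_1$ and inserting mollifications at scales $\ell'\le \ell\le \ell''$, it splits $\Lambda_\ell=I_1+I_2+I_3$ according to whether the third factor is $f*\vartheta_{\ell'}$, $f*\vartheta_{\ell''}-f*\vartheta_{\ell'}$, or $f-f*\vartheta_{\ell''}$. The main term is (after comparing $I_1$ with $I_4$) the purely low-frequency quantity $\int f\,(f*\vartheta_{\ell'})^2$, which is bounded below by $c_0(\int f)^3=c_0$ by Bourgain's positivity lemma (Lemma \ref{lemma:bourgain}); this uses only $f\ge 0$ and $\int f=1$, involves no Fourier expansion, and is uniform in the scale---so the lower bound does not degenerate like $2^{-l_0}$. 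Only the genuinely high-frequency tail $I_3$ is estimated through the oscillatory-integral machinery (Lemmas \ref{170724lemma3.1} and \ref{170724lemma3.2}), where the $H^{-s_0}$ norm suffices and one can beat it by taking $\ell''$ large relative to $\ell$. The intermediate band $I_2$ is not estimated absolutely at all: it is bounded by the energy increment $\|f*\vartheta_{\ell'}-f*\vartheta_{\ell''}\|_2$, and the pigeonholing over a bounded number of scales (using $\sum_k\|f*\vartheta_{\ell_k}-f*\vartheta_{\ell_{k+1}}\|_2^2\le C_0M^2$) produces a scale at which this increment is small. If you want to salvage your approach, you would have to replace your main term by such a positivity-based low-frequency term; as written, the proposal's central estimate fails.
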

The proof of Lemma \ref{bourgain-lemma} is based on Bourgain's energy pigeonholing argument \cite{Bou88} and the Sobolev improving estimate in Proposition \ref{180816prop3.1}. We postpone its proof to the next section. 

After finding $l_0$ and $c_0$, we will pick $\alpha$ to be sufficiently close to one, and prove that 
\beq\label{170710e3.10}
\left|\iint \mu_{i_1}(x)\mu_{i_2}(x+t)\mu_{i_3}(x+P(t))\tau_{l_0}(t)dt dx\right|\le c_0/8,
\endeq
when $(i_1, i_2, i_3)\neq (1, 1, 1)$. For the sake of simplicity, let us assume that we are working with $(i_1, i_2, i_3)=(1, 1, 2)$.
The proofs of the other cases are similar. In previous sections, we proved that 
\beq\label{170710e3.11}
\left|\iint \mu_{1}(x)\mu_{1}(x+t)\mu_{2}(x+P(t))\tau_{l_0}(t)dt dx\right|\le C_{l_0} \|\mu_1\|^2_{H^{-s_0}} \|\mu_2\|_{H^{-s_0}},
\endeq
for some $s_0>0$ depending only on the polynomial $P$. By the definition of $\mu_1$ and the assumption on $\mu$, we have 
\beq
\|\mu_1\|^2_{H^{-s_0}}\le \|\mu\|^2_{H^{-s_0}} \le C_2^2 (1-\alpha)^{-2B} \sum_{k\ge 1} |k|^{-\beta} |k|^{-2s_0}  
\endeq
Next we turn to the term $\|\mu_2\|_{H^{-s_0}}$.
\beq
\begin{split}
\|\mu_2\|^2_{H^{-s_0}}&\le C_2^2 (1-\alpha)^{-2B} \left(\sum_{1\le k\le 2N} \frac{k^2}{(2N+1)^2} k^{-\beta-2s_0} +\sum_{k>2N} k^{-\beta-2s_0} \right)\\
		&\lesim C_2^2 (1-\alpha)^{-2B} \left( \frac{N^{3-\beta-2s_0}}{N^2} +N^{1-\beta-2s_0}\right) \le C_2^2 (1-\alpha)^{-2B} N^{1-\beta-2s_0}
\end{split}
\endeq
Combined with \eqref{170710e3.11}, we obtain 
\beq
\left|\iint \mu_{1}(x)\mu_{1}(x+t)\mu_{2}(x+P(t))\tau_{l_0}(t)dt dx\right|\le C_{l_0} C_2^{10} (1-\alpha)^{-10B} N^{(1-\beta-2s_0)/2}
\endeq
Recall \eqref{181208e4.5}. 
If we choose $\alpha$ close enough to one, depending on all the other parameters, we will be able to conclude \eqref{170710e3.10}.

\section{\bf Proof of Lemma \ref{bourgain-lemma}}\label{180817section5}\label{181208section5}

Before we start the proof of Lemma \ref{bourgain-lemma}, we state a preliminary lemma. 
Recall the definition of $\vartheta$ in Section \ref{181208section3}.

\begin{lem}[Bourgain \cite{Bou88}]\label{lemma:bourgain}
For a non-negative function $f$ supported on $[0,1]$ and  $k,l\in \N$ we have 
\begin{equation*}
\int_0^1  f (f* \vartheta_{k}) (f* \vartheta_{\ell}) \ge c_0 \Big(\int_0^1 f\Big)^3
\end{equation*}
for some constant $c_0>0$ depending only on the choice of $\vartheta$.
\end{lem}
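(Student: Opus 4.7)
Without loss of generality I will assume $k \leq \ell$; the plan is then to execute a two-scale dyadic pigeonhole argument driven by a pointwise lower bound for $\vartheta_j$ near the origin. The starting observation is that the hypotheses on $\vartheta$ (non-negative, even, constant on $[-1,1]$, monotone on $[1,2]$, with $\widehat{\vartheta}(0) = \int \vartheta = 1$) force the constant value $c := \vartheta(0)$ to satisfy $c \geq 1/4$; hence $\vartheta_k(x) \geq c \cdot 2^k$ on $[-2^{-k}, 2^{-k}]$.

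First I would partition $[0,1]$ into the $2^k$ dyadic intervals $I_j := [j 2^{-k}, (j+1) 2^{-k}]$ and set $a_j := \int_{I_j} f$. Since $[x - 2^{-k}, x + 2^{-k}] \supset I_j$ whenever $x \in I_j$, the pointwise lower bound on $\vartheta_k$ immediately yields $(f * \vartheta_k)(x) \geq c \cdot 2^k a_j$ throughout $I_j$. Next I would sub-partition each $I_j$ into the $2^{\ell-k}$ intervals $I_{j,m}$ of length $2^{-\ell}$, set $b_{j,m} := \int_{I_{j,m}} f$, and by the identical reasoning at scale $2^{-\ell}$ obtain $(f * \vartheta_\ell)(x) \geq c \cdot 2^\ell b_{j,m}$ on $I_{j,m}$.

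Combining these pointwise bounds in the integral would produce
\begin{equation*}
\int_0^1 f \, (f*\vartheta_k)(f*\vartheta_\ell) \geq c^2 \, 2^{k+\ell} \sum_j a_j \sum_m b_{j,m}^2.
\end{equation*}
Cauchy-Schwarz, applied to the constraint $\sum_m b_{j,m} = a_j$ over $2^{\ell-k}$ terms, gives $\sum_m b_{j,m}^2 \geq 2^{k-\ell} a_j^2$, collapsing the estimate to $c^2 \, 2^{2k} \sum_j a_j^3$. A final use of Jensen's inequality on $t \mapsto t^3$ over the $2^k$ nonnegative numbers $\{a_j\}$ contributes $\sum_j a_j^3 \geq 2^{-2k} \bigl(\int_0^1 f\bigr)^3$, and the two factors $2^{\pm 2k}$ cancel exactly to yield the lemma with $c_0 = c^2$.

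I do not anticipate any serious obstacle: the entire argument is structural and uses only the non-negativity of $\vartheta$ together with its pointwise lower bound near the origin. The step most deserving of care is the derivation of the quantitative bound $c = \vartheta(0) \geq 1/4$ from the normalization and monotonicity assumptions on $\vartheta$, since this is precisely what ensures that the resulting constant $c_0 = c^2$ can be taken independent of the scales $k$ and $\ell$.
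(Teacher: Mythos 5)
Your proof is correct. The paper itself omits the argument and simply defers to \cite{DGR17}, so there is nothing in the text for you to match against; what you have written is a legitimate, self-contained derivation. The three ingredients — the structural lower bound $\vartheta(0)\geq 1/4$ coming from $\int\vartheta=1$, $\mathrm{supp}\,\vartheta\subset[-2,2]$ and monotonicity on $[1,2]$; the nested dyadic decomposition at scales $2^{-k}$ and $2^{-\ell}$ with the containment $[x-2^{-k},x+2^{-k}]\supset I_j$ for $x\in I_j$; and the Cauchy--Schwarz/Jensen chain $\sum_m b_{j,m}^2\geq 2^{k-\ell}a_j^2$, $\sum_j a_j^3\geq 2^{-2k}\bigl(\sum_j a_j\bigr)^3$ — are all used correctly, and the dyadic factors $2^{k+\ell}\cdot 2^{k-\ell}\cdot 2^{-2k}$ cancel exactly as you claim, giving $c_0=\vartheta(0)^2\geq 1/16$. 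The one step that genuinely needs the hypotheses on $\vartheta$ is the pointwise bound $\vartheta\equiv\vartheta(0)$ on $[-1,1]$ together with $\vartheta\leq\vartheta(0)$ (which follows since $\vartheta$ is even, constant on $[-1,1]$, decreases on $[1,2]$, and vanishes at $\pm 2$); you handle this correctly. Two cosmetic remarks: the symmetry of the expression in $k$ and $\ell$ does justify the reduction to $k\leq\ell$, but you might state it explicitly, and it is worth noting that the argument degenerates gracefully in the edge cases $k=\ell$ (one sub-interval per $I_j$) and $k=0$ (a single interval), so no separate treatment is required.
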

The proof of this lemma was omitted in \cite{Bou88}. For a proof, we refer to \cite{DGR17}.\\

 In this section, we will use $f$ to stand for $\mu_1$. Hence $f$ is a function satisfying 
\beq
\int f=1 \text{ and } 0\le f\le A\cdot 2^{6B} C_1=: M.
\endeq
For simplicity, we assume $||\tau_0||_1 = 1$ and change the notation a bit by taking 
\beq
\tau_l(t)=2^l \tau_0(2^l t) \text{ instead of } \tau_l(t)=\tau_0(2^l t).
\endeq
We also need to show that $l_0$ can be bounded from above by a number which depends only on $C_1$, $C_2$, $B$, $\beta$ and $P$. Denote 
\beq
\Lambda_{l}=\iint f(x)f(x+t)f(x+P(t))\tau_{l}(t)dt dx.
\endeq
For $\ell', \ell, \ell'' \in \N$ with $1\leq \ell'\leq \ell \leq \ell''$, we have 
\begin{align*}
 \Lambda_{l}& = \int_0^1\int_0^1  f(x)f(x+t)f(x+P(t))\tau_{\ell}(t)dxdt\\ 
 &= I_1 + I_2 + I_3,
\end{align*}
 where
\begin{align*}
I_1 & =\int_0^1\int_0^1  f(x) f(x+t)(f*\vartheta_{\ell'})(x+P(t))\tau_{\ell}(t)dxdt,\\ 
I_2 & = \int_0^1\int_0^1  f(x) f(x+t)(f*\vartheta_{\ell''}-f*\vartheta_{\ell'})(x+P(t))\tau_{\ell}(t)dxdt,\\
I_3 & = \int_0^1\int_0^1  f(x)f(x+{t})(f-f*\vartheta_{\ell''})(x+P(t)) \tau_{\ell}(t)dxdt. 
\end{align*}

We analyze each of the  terms separately.
Splitting $f-f*\vartheta_{\ell''}$ into Littlewood-Paley pieces and applying Lemma \ref{170724lemma3.2} and Lemma \ref{170724lemma3.1}, it follows that for some $\sigma>0$  we have
\begin{align*}
 |I_3| \lesim_M   2^{\gamma_0\ell-\sigma \ell''} \|f\|_{L^2(\R)}^2 \leq 2^{-100}c_0,
\end{align*}  
where the last inequality holds provided that $\ell''$ is taken large enough with respect to $\ell$. Here $c_0$ is the constant from Lemma \ref{lemma:bourgain}.\\

To estimate $I_2$ we apply the Cauchy-Schwarz inequality in $x$, which yields
\begin{align*}
|I_2| &\le \int_0^1 \|f(x)f(x+{t})\|_{L_x^2} \|(f*\vartheta_{\ell''}-f*\vartheta_{\ell'})(x+P(t))\|_{L_x^2} \tau_{\ell}(t) dt\\
&\lesim_M \|f*\vartheta_{\ell''}-f*\vartheta_{\ell'}\|_2. 
\end{align*}
Passing to the last line we bounded the $L^\infty$ norm of $f$ by $M$ and the $L^1$ norm of $\tau_{\ell}$ by one.\\

To estimate $I_1$, we compare it to
\begin{align*}
I_4&=\int_0^1\int_0^1  f(x)f(x+{t})(f*\vartheta_{\ell'})(x)    \tau_{\ell}(t)dx dt\\
&=\int_0^1 f(x) (f*\vartheta_{\ell'})  (x) (f* \tau_{\ell})(x)dx.
\end{align*}
Consider the difference
\begin{equation*}
I_4-I_1= \int_0^1\int_0^1  f(x) f(x+{t})\big( (f*\vartheta_{\ell'})  (x)-(f*\vartheta_{\ell'}) (x+P(t))\big ) \tau_{\ell}(t)dx dt.
\end{equation*}
By the mean value theorem we obtain  
\begin{equation*}
|(f*\vartheta_{\ell'})(x)-(f*\vartheta_{\ell'})(x+P(t))|\leq  2^{\ell'}\|f*(\vartheta_{\ell'})'  \|_\infty|P(t)| \leq M  {2^{\ell'-\ell+1}},
\end{equation*}
whenever $t$ is in the support of $\tau_{\ell}$.
Choosing $\ell$ large enough with respect to $\ell'$ gives
\begin{equation*}
|I_4-I_1|\leq  2^{-100}c_0.
\end{equation*}
We return to analyzing the term $I_4$, which we write as
\begin{align}\label{termI41}
I_4 &= \Big( \int_0^1  f(x) (f*\vartheta_{\ell'})(x) \big ((f* \tau_{\ell})(x)-(f* \vartheta_{\ell'})(x) \big )dx\Big)\\ \label{termI42}
&+ \Big( \int_0^1  f(x) (f*\vartheta_{\ell'})  (x) (f* \vartheta_{\ell'})(x)dx \Big ) 
\end{align}
By Lemma \ref{lemma:bourgain}, the term \eqref{termI42} 
is bounded from below by $c_0$. 
For \eqref{termI41} we use   the triangle inequality and  Young's convolution inequality to estimate
\begin{align} \nonumber
& \|f*\tau_{\ell }-f*\vartheta_{\ell' }\|_2\\ \label{termI421}
& \lesim_M  \|(f*\tau_{\ell }*\vartheta_{\ell'' })-(f*\vartheta_{\ell' }*\tau_{\ell })\|_2\\ \label{termI422}
&   +    \|\tau_{\ell }-(\tau_{\ell }*\vartheta_{\ell'' })\|_1\\
\label{termI423}
& + \|\vartheta_{\ell' }-(\vartheta_{\ell' }*\tau_{\ell })\|_1
  \end{align}
By another application of Young's convolution inequality in \eqref{termI421}, we bound the last display by 
\begin{align*}
\lesim_M \|(f*\vartheta_{\ell'' })-(f*\vartheta_{\ell' })\|_2
  +   \|\tau_{\ell}-(\tau_{\ell}*\vartheta_{\ell''})\|_1 + \|\vartheta_{\ell'}-(\vartheta_{\ell'}*\tau_{\ell})\|_1.
\end{align*}
By the mean value theorem, the second and third term are bounded from above by    $2^{-100}c_0 $  provided $\ell''$ is chosen large enough with respect to $\ell$, and $\ell$ large enough with respect to $\ell'$. This in turn bounds \eqref{termI41}  from above by 
$$ \|f*\vartheta_{\ell'' }-f*\vartheta_{\ell' }\|_2 + 2^{-99}c_0.$$
 
From   the estimates for the terms $I_1,I_2,I_3,I_4$ and $I_4-I_1$     we  obtain
\begin{align*}
c_0 \leq \Lambda_l + C_M \|f*\vartheta_{\ell'} -f*\vartheta_{\ell''}\|_2+ 2^{-90}c_0.
\end{align*}
Here $C_M$ is a large constant that depends only on $M$. For instance it suffices to take $C_M=M^{10}$. Therefore, we either have  
 $\Lambda_{l}> 2^{-10} c_0$,
or 
\begin{equation*}
\|f*\vartheta_{\ell'} -f*\vartheta_{\ell''}\|_2> 2^{-10}C_{M}^{-1}c_0.
\end{equation*}
By the preceding discussion we can construct  a sequence $\{100=\ell_0<\ell_1<\cdots < \ell_k <\cdots\}\subseteq\N$, which is independent of $f$ and satisfies $\ell_{k+1} \le C \ell_k$ for some sufficiently large constant $C$ such that for each $k$
either $$\Lambda_{\ell_k}> 2^{-10} c_0 $$ or 
\begin{align}\label{secondalt}
\|f*\vartheta_{\ell_k} -f*\vartheta_{{\ell}_{k+1}}\|_2> 2^{-10} c_0 C_M^{-1}.
\end{align}
 Observe that  for any $K\geq 0$ one has
\begin{equation}\label{170725e6.10}
\sum_{k=0}^K\Big( \|f*\vartheta_{\ell_k} -f*\vartheta_{\ell_{k+1}}\|^2_2\Big ) \leq C_0 \|f\|^2_2\leq C_0 M^2
\end{equation}
with $C_0$ independent of $K$ and $f$. Let us fix $K> C_02^{100}c_0^{-2}C_M^2 M^4$. 
If \eqref{secondalt} holds for all $0<k \le K$, then \eqref{170725e6.10} yields $K\leq C_02^{100}c_0^{-2} C_M^2 M^4$, which is a contradiction. Thus, for some $0\le k \le K$ we necessarily have $\Lambda_{\ell_k}>2^{-10}  c_0$. Together with $\ell_{k+1} \le C \ell_k$
 this gives a lower estimate on $\Lambda_{\ell_k}$, as claimed in Lemma \ref{bourgain-lemma}.\\

\section{\bf Existence of polynomial patterns: Proof of Theorem \ref{180817thm1.1}}\label{181208section6}

Let $\mu$ be as in Theorem \ref{180817thm1.1}. Let $l_0$ be as in Theorem \ref{main-result}. Theorem \ref{180817thm1.1} follows if we are able to construct a Borel measure $\nu$ on $[0, 1]\times [0, 1]$ such that 
\beq\label{180817e7.1}
\nu([0, 1]\times [0, 1])>0
\endeq
and
\beq\label{180817e7.2}
\nu \text{ is supported on } X=\{(x, y)\in [0, 1]^2: x, y, x+P(y-x)\in E, \  2^{-l_0}\le y-x\le 2^{-l_0+1}\}. 
\endeq
For $\epsilon>0$, define 
\beq
\mu_{\epsilon}:=\mu * \vartheta_{\epsilon},
\endeq
where $\vartheta_{\epsilon}(x)=\epsilon^{-1}\vartheta(x/\epsilon)$. A standard argument shows that 
\beq
\mu_{\epsilon}\to \mu \text{ in } H^{-s_0} \text{ as } \epsilon\to 0.
\endeq
We define a linear functional $\nu$ acting on functions $f: [0, 1]^2\to \R$ by 
\beq\label{180817e7.4}
\langle \nu, f\rangle:=\lim_{\epsilon\to 0} \iint f(x, y) \mu_{\epsilon}(x+P(y-x)) \tau_{l_0}(y-x)d\mu_{\epsilon}(x) d\mu_{\epsilon}(y).
\endeq
The following lemma holds.
\begin{lem}\label{180817lemma7.1}
The limit in \eqref{180817e7.4} exists for every continuous function $f$. Moreover, 
\beq\label{180817e7.5}
|\langle \nu, f\rangle|\le C \|f\|_{\infty},
\endeq
where $C$ is independent of $f$. 
\end{lem}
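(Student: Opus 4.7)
The plan is to handle the lemma in two stages: first, establish a uniform-in-$\epsilon$ bound $|\langle\nu_\epsilon, f\rangle|\le C\|f\|_\infty$ for the $\epsilon$-regularized functional
\[
\langle\nu_\epsilon, f\rangle := \iint f(x,y)\mu_\epsilon(x+P(y-x))\tau_{l_0}(y-x)\,d\mu_\epsilon(x)\,d\mu_\epsilon(y);
\]
second, prove convergence as $\epsilon\to 0$ on a dense subclass of $C([0,1]^2)$. Combined via a standard three-$\eta$ argument, these two steps give both the existence of the limit and the bound \eqref{180817e7.5}.

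After the change of variable $t=y-x$ and writing $d\mu_\epsilon(x)=\mu_\epsilon(x)\,dx$, we have
\[
\langle\nu_\epsilon, f\rangle = \iint f(x,x+t)\mu_\epsilon(x)\mu_\epsilon(x+t)\mu_\epsilon(x+P(t))\tau_{l_0}(t)\,dx\,dt.
\]
For the first stage, since $\vartheta\ge 0$ we have $\mu_\epsilon,\tau_{l_0}\ge 0$, so the triangle inequality pulls $\|f\|_\infty$ out of the integral, leaving exactly the trilinear form controlled by Proposition \ref{180816prop3.1} (in its dualized form, pairing $T_{l_0}(\mu_\epsilon,\mu_\epsilon)\in H^{s_0}$ against $\mu_\epsilon\in H^{-s_0}$). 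This gives an upper bound of $C\cdot 2^{\gamma_0 l_0}\|\mu_\epsilon\|_{H^{-s_0}}^3$. Using $|\widehat{\vartheta_\epsilon}(\xi)|=|\widehat\vartheta(\epsilon\xi)|\le \|\widehat\vartheta\|_\infty$, one has $\|\mu_\epsilon\|_{H^{-s_0}}\le C\|\mu\|_{H^{-s_0}}$ uniformly in $\epsilon$, and assumption (B) in \eqref{180817e1.1} together with $\beta>1-s_0$ forces $\mu\in H^{-s_0}$.

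For the second stage, take tensor products $f(x,y)=g(x)h(y)$ with smooth $g,h$. Then
\[
\langle\nu_\epsilon, g\otimes h\rangle=\iint (h\mu_\epsilon)(x+t)\mu_\epsilon(x+P(t))(g\mu_\epsilon)(x)\tau_{l_0}(t)\,dx\,dt,
\]
which is precisely the three-function trilinear form to which Proposition \ref{180816prop3.1} applies. Multiplication by a fixed smooth compactly supported function preserves $H^{-s_0}$ (by duality with its boundedness on $H^{s_0}$), so $g\mu_\epsilon\to g\mu$ and $h\mu_\epsilon\to h\mu$ in $H^{-s_0}$ as $\epsilon\to 0$. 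Splitting the difference $\langle\nu_\epsilon, g\otimes h\rangle-\langle\nu_{\epsilon'}, g\otimes h\rangle$ into three telescoping trilinear pieces in which one slot carries $\mu_\epsilon-\mu_{\epsilon'}$ or $g(\mu_\epsilon-\mu_{\epsilon'})$ or $h(\mu_\epsilon-\mu_{\epsilon'})$, and applying the Proposition \ref{180816prop3.1} bound to each piece, shows $\{\langle\nu_\epsilon, g\otimes h\rangle\}_\epsilon$ is Cauchy. Linearity extends convergence to finite linear combinations of such tensor products, and the Stone--Weierstrass theorem gives density of this class in $C([0,1]^2)$.

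To conclude, given $f\in C([0,1]^2)$ and $\eta>0$, pick $\widetilde f=\sum_i g_i\otimes h_i$ with $\|f-\widetilde f\|_\infty<\eta$. The uniform bound from the first stage yields $|\langle\nu_\epsilon, f-\widetilde f\rangle|\le C\eta$ for all $\epsilon$, while the second stage gives Cauchy-ness of $\langle\nu_\epsilon,\widetilde f\rangle$; combining gives Cauchy-ness of $\langle\nu_\epsilon, f\rangle$ and so existence of the limit, which then inherits the bound \eqref{180817e7.5}. The main obstacle is reconciling the $f(x,x+t)$ factor with the clean three-function trilinear form of Proposition \ref{180816prop3.1}; the tensor-product decomposition via Stone--Weierstrass dissolves this obstacle, reducing it to the routine check that pointwise multiplication by a smooth bump is bounded on $H^{-s_0}$.
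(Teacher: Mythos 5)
Your proof is correct and follows essentially the same route as the paper: a uniform-in-$\epsilon$ bound obtained by pulling out $\|f\|_\infty$ and applying Proposition \ref{180816prop3.1}, then Cauchy-ness on a dense subclass by telescoping the trilinear form in each slot. The only (immaterial) difference is that the paper takes the dense class to be trigonometric polynomials $e^{iN_1x+iN_2y}$ --- a special case of your tensor products --- for which the $H^{-s_0}$ stability of the multiplied measure is simply a translation of $\widehat{\mu_\epsilon}$ on the frequency side rather than an appeal to boundedness of multiplication by a smooth bump.
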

\begin{proof}[Proof of Lemma \ref{180817lemma7.1}.]
For every $\epsilon>0$, the following inequality holds.
\beq
\begin{split}
& \Big|\iint f(x, y) \mu_{\epsilon}(x+P(y-x))\tau_{l_0}(y-x) d\mu_{\epsilon}(x) d\mu_{\epsilon}(y) \Big|\\
& \le \|f\|_{\infty} \iint \mu_{\epsilon}(x+P(y-x))\tau_{l_0}(y-x) d\mu_{\epsilon}(x) d\mu_{\epsilon}(y)
\end{split}
\endeq
By the Sobolev improving estimate in Proposition \ref{180816prop3.1}, this can be bounded by 
\beq
2^{\gamma_0 l_0} \|\mu_{\epsilon}\|^2_{H^{-s_0}} \|\mu_{\epsilon}\|_{H^{-s_0}}\le 2^{3+\gamma_0 l_0} \|\mu\|^3_{H^{-s_0}}.
\endeq
Recall that $1-s_0<\beta$. Under this assumption we know $\|\mu\|_{H^{-s_0}}$ is finite. This proves \eqref{180817e7.5} if the limit \eqref{180817e7.4} exists. \\

It remains to prove the existence of the limit \eqref{180817e7.4}. By density, it suffices to prove that the limit exists for every smooth function $f$ whose Fourier series consists of only finitely many terms. Hence it suffices to prove that the limit 
\beq
\lim_{\epsilon\to 0}\iint e^{iN_1 x+i N_2 y} \mu_{\epsilon}(x+P(y-x))\tau_{l_0}(y-x) d\mu_{\epsilon}(x) d\mu_{\epsilon}(y)
\endeq
exists for given $N_1, N_2\in \N$. By Proposition \ref{180816prop3.1}, 
\beq\label{180817e7.10}
\begin{split}
& \Big|\iint e^{iN_1 x+i N_2 y} (\mu_{\epsilon_1}-\mu_{\epsilon_2})(x+P(y-x))\tau_{l_0}(y-x) d\mu_{\epsilon}(x) d\mu_{\epsilon}(y)\Big|\\
& \le 2^{\gamma_0 l_0} \|\mu_{\epsilon_1}-\mu_{\epsilon_2}\|_{H^{-s_0}} \|\mu'\|_{H^{-s_0}}  \|\mu''\|_{H^{-s_0}},
\end{split}
\endeq
where 
\beq
d \mu'(x)= e^{iN_1 x} d\mu_{\epsilon}(x) \text{ and } d\mu''(x)=e^{iN_2 x}d\mu_{\epsilon}(x). 
\endeq
The right hand side of \eqref{180817e7.10} can be further bounded by 
\beq
C_{N_1, N_2} 2^{\gamma_0 l_0} \|\mu_{\epsilon_1}-\mu_{\epsilon_2}\|_{H^{-s_0}} \|\mu\|^2_{H^{-s_0}}.
\endeq
That the limit exists follows from the fact that $\|\mu_{\epsilon_1}-\mu_{\epsilon_2}\|_{H^{-s_0}}$ can be made arbitrarily small when $\epsilon_1$ and $\epsilon_2$ are chosen small enough. This finishes the proof of Lemma \ref{180817lemma7.1}. 
\end{proof}

After obtaining Lemma \ref{180817lemma7.1}, we apply the Riesz representation theorem and obtain a non-negative measure $\nu$ defined by \eqref{180817e7.4}. It remains to prove that $\nu$ satisfies the desired properties \eqref{180817e7.1} and \eqref{180817e7.2}. 

To prove \eqref{180817e7.1}, we write 
\beq
\begin{split}
\langle \nu, 1\rangle &=\lim_{\epsilon\to 0} \iint \mu_{\epsilon}(x+P(t)) \tau_{l_0}(t)d\mu_{\epsilon}(x) d\mu_{\epsilon}(x+t)\\
			&= \iint_{\R^2} \widehat{\mu}(\xi) \widehat{\mu}(\eta) \big[\int_{\R} e^{it\xi+iP(t)\eta}\tau_{l_0}(t)dt \big] \widehat{\mu}(\xi+\eta) d\xi d\eta. 
\end{split}
\endeq
From Theorem \ref{main-result}, it follows that $\langle \nu, 1\rangle \ge c_0>0$. This proves \eqref{180817e7.1}. 

Finally, we prove \eqref{180817e7.2}. Let us introduce 
\beq
\widetilde{X}:=\{(x, y)\in [0, 1]^2: x, y, x+P(y-x)\in E\}.
\endeq
By the definition of the measure $\nu$, it is enough to prove that $\nu$ is supported on $\widetilde{X}$. Let $f$ be a continuous function with $\text{supp}(f)$ disjoint from $\widetilde{X}$. We need to prove that $\langle \nu, f\rangle=0$. Since $E$ is closed, $\widetilde{X}$ is also closed. Moreover, $\text{dist}(\text{supp}(f), \widetilde{X})>0$. Using a partition of unity, we are able to write $f$ as a finite sum $\sum f_j$, where for each $j$, the function $f_j$ is continuous and satisfies at least one of the following 
\beq\label{180817e7.15}
\begin{split}
& \text{dist}(\text{supp}(f_j), E\times [0, 1])>0,\\
& \text{dist}(\text{supp}(f_j), [0, 1]\times E\})>0,\\
& \text{dist}\Big(\big\{x+P(y-x): (x, y)\in \text{supp}(f_j)\big\}, E\Big)>0.
\end{split}
\endeq
We will prove that $\langle \nu, f_j\rangle=0$ for every $j$. If $f_j$ satisfies either the first or the second condition in \eqref{180817e7.15}, then the integral in \eqref{180817e7.4} is $0$ for every $\epsilon$ small enough. If $f_j$ satisfies the third condition in \eqref{180817e7.15}, then the support of $f_j$ is a positive distance from the support of $\mu_{\epsilon}(x + P(y-x)$ for sufficiently small $\epsilon$, so the integral is again equal to $0$ if $\epsilon$ is sufficiently small. This finishes the proof of \eqref{180817e7.2}.


\begin{thebibliography}{CCCD99}


\bibitem[Blu96]{Blu96} C. Bluhm. \emph{Random recursive construction of Salem sets,} Ark. Mat. 34 (1996), 51--63.

\bibitem[Blu98]{Blu98} C. Bluhm. \emph{On a theorem of Kaufman: Cantor-type construction of linear fractal Salem sets,} Ark. Mat. 36 (1998), 307--316.


\bibitem[Bou88]{Bou88} J. Bourgain.  \emph{A nonlinear version of Roth's theorem for sets of positive density in the real line}, J. Anal. Math. {\bf 50} (1988), 169--181.

\bibitem[CLP16]{CLP16} V. Chan, I. \L aba, and M. Pramanik. \emph{Finite configurations in sparse sets,} J. Anal. Math. 128 (2016), 289--335.

\bibitem[DGR17]{DGR17} P. Durcik, S. Guo and J. Roos. \emph{
A polynomial Roth theorem on the real line.} arXiv:1704.01546, to appear in the Trans. Amer. Math. Soc.


\bibitem[GX16]{GX16} P. Gressman and L. Xiao. \emph{Maximal Decay Inequalities for Trilinear Oscillatory Integrals of Convolution Type,} J. Func. Anal., 271, No. 12 (2016), 3695--3726.


\bibitem[HLP15]{HLP15} K. Henriot, I. \L aba, M. Pramanik. \emph{On polynomial  configurations in fractal sets,} Anal. PDE {\bf 9} (2016), no. 5, 1153--1184.

\bibitem[Hor73]{Hor73} L. H\"ormander, \emph{Oscillatory integrals and multipliers on $FL^p$},  Ark. Mat. {\bf 11} (1973), no. 1, 1--11. 


\bibitem[Kah85]{Kah85} J. P. Kahane. \emph{Some Random Series of Functions,} Cambridge Univ. Press, 1985.



\bibitem[Kau81]{Kau81} L. Kaufman. \emph{On the theorem of Jarnik and Besicovitch,} Acta Arith. 39 (1981), 265--267.

\bibitem[Kra19]{Kra19} B. Krause. \emph{A Non-Linear Roth Theorem for Fractals of Sufficiently Large Dimension.} arXiv:1904.10562. 

\bibitem[LP09]{LP09} I. \L aba and M. Pramanik. \emph{Arithmetic progressions in sets of fractional dimension.} Geom. Funct. Anal. 19 (2009), no. 2, 429--456.


\bibitem[Li13]{Li13} X. Li. \emph{Bilinear Hilbert transforms along curves I: The monomial case},  Anal. PDE {\bf 6} (2013), no. 1, 197--220. 


\bibitem[Xiao17]{Xiao17} L. Xiao. \emph{Sharp Estimates for Trilinear Oscillatory Integrals and an Algorithm of Two-dimensional Resolution of Singularities,} Rev. Mat. Ibero., 33, No. 1 (2017), 67--116.

\bibitem[Sal50]{Sal50} R. Salem. \emph{On singular monotonic functions whose spectrum has a given Hausdorff dimension,} Ark. Mat. 1 (1950), 353--365.

\bibitem[Ste93]{Ste93} E. M. Stein. {\em Harmonic Analysis: Real-Variable Methods, Orthogonality, and Oscillatory Integrals},  Princeton University Press, Princeton, 1993. 

\end{thebibliography}
\end{document}